\theoremstyle{definition}
\newtheorem{definition}{Definition}[section]
\newtheorem{remark}[definition]{Remark} 
\newtheorem{defn}[definition]{Definition}
\newtheorem*{claim}{Claim}
\theoremstyle{plain}
\newtheorem{theorem}[definition]{Theorem}
\newtheorem{corollary}[definition]{Corollary}
\newtheorem{lemma}[definition]{Lemma}
\newtheorem{proposition}[definition]{Proposition}  
\newtheorem{conjecture}[definition]{Conjecture}
\newenvironment{theorem*}[1]
{\innercustomthm}
{\endinnercustomthm}
\newenvironment{prop*}[1]
{\innercustomprop}
{\endinnercustomprop}
\newenvironment{corollary*}[1]
{\innercustomcor}
{\endinnercustomthm}
\newcommand{\Z}{\ensuremath{{\mathbb{Z}}}}
\newcommand{\R}{\ensuremath{{\mathbb{R}}}}
\newcommand{\G}{\ensuremath{{\Gamma}}}
\newcommand{\AG}{A_{\Gamma}}
\newcommand{\WG}{\ensuremath{{W_{\Gamma}}}}
\newcommand{\DG}{\ensuremath{{\mathcal{D}_\Gamma}}} 
\newcommand{\DGP}{\ensuremath{{\mathcal{D}_\Gamma^+}}}
\newcommand{\sDGP}{\ensuremath{{\mathcal{D}_\Gamma^{\Delta +}}}}
\newcommand{\sDG}{\ensuremath{{\mathcal{D}^{\Delta}_\Gamma}}} 
\newcommand{\Cay}[2]{{\rm Cay}({#1},{#2})}
\newcommand{\MCay}{\Cay{\AG}{M}}
\newcommand{\XCay}{\Cay{\AG}{X}}
\tikzset{vertex/.style={circle, draw, fill=black!50},inner sep=0pt, minimum width=4pt}
\title{The Artin monoid cayley graph}
\author{Rachael Boyd}
\address{School of Mathematics and Statistics, University of Glasgow, Glasgow G12 8QQ, UK}
\email{rachael.boyd@glasgow.ac.uk}
\author{Ruth Charney}
\address{Department of Mathematics, Brandeis University, 415 South St., Waltham, MA 02421, USA}
\email{charney@brandeis.edu}
\author{Rose Morris-Wright}
\address{Department of Mathematics, Middlebury College, 14 Old Chapel rd, Middlebury VT, 05753, USA}
\email{rmorriswright@middlebury.edu}
\author{Sarah Rees}
\address{School of Mathematics, Statistics and Physics, University of Newcastle, Newcastle NE1 7RU, UK}
\email{Sarah.Rees@newcastle.ac.uk}
\begin{document}

\subjclass[2020]{
	20F36 (primary), 
	20F55, 
	20M32, 
	20F65 (secondary). 
}
\keywords{Artin monoids, Artin groups.}

\begin{abstract}

In this paper we investigate properties of the Artin monoid Cayley graph.   This is the Cayley graph of an Artin group $A_\G$ with respect to the (infinite) generating set given by the associated Artin monoid $A^+_\G$.  In a previous paper, the first three authors introduced a monoid Deligne complex and showed that this complex is contractible for all Artin groups.  In this paper, we show that the Artin monoid Cayley graph is quasi-isometric to a modification of the Deligne complex for $A_\G$ obtained by coning off translates of the monoid Deligne complex.  We then address the question of when the monoid Cayley graph has infinite diameter.  We conjecture that this holds for all Artin groups of infinite type.  We give a set of criteria that imply infinite diameter, and using existing solutions to the word problem for large-type Artin groups and 3-free Artin groups, we prove that the conjecture holds for any Artin group containing a 3-generator subgroup of one of these two types.

\end{abstract}

\maketitle

\section{Introduction} \label{Section:Introduction}

Artin groups (also known as Artin-Tits groups) form a large class of groups closely associated to Coxeter groups.  They arise naturally in algebraic geometry, topology, and representation theory.   The classical examples of such groups are braid groups, whose associated Coxeter groups are the symmetric groups.  Braid groups have been extensively studied using both combinatorial and geometric methods.  While many of these techniques can be generalized to all spherical type Artin groups (that is, those whose associated Coxeter group is finite),  Artin groups associated to infinite Coxeter groups present a much bigger challenge and remain largely mysterious.  Recently, there has been a major push to find new geometric approaches to study these groups.  In this paper, we investigate some new combinatorial approaches.  

To specify an Artin group, we begin with a labelled graph $\G$
with a finite vertex set $X=\{x_1, \dots x_n\}$ and edge set $E$, such that each edge $\{x_i,x_j\}\in E$ is labelled by an integer $m_{i,j}=m_{j,i}\geq 2$.  We then define the \textit{Artin group}, $\AG$, to be the group with presentation 
\[
\AG =\langle X \mid \underbrace{x_ix_jx_i\ldots}_{\text{length }m_{i,j}}=\underbrace{x_jx_ix_j\ldots}_{\text{length }m_{i,j} }\,\, \forall \{x_i,x_j\} \in E \rangle.
\] 
If there is no edge in $\G$ between $x_i$ and $x_j$ in~$X$, we set $m_{i,j}=\infty$ and there is no relation between $x_i$ and $x_j$ in the presentation. 
The associated Coxeter group $\WG$ is the group whose presentation is the same as $\AG$ with the added relations $x_i^2=e$ for all $i$, where $e$ is the identity element.  

If we allow only positive powers of the generators, then the presentation above gives rise to a monoid, $\AG^+$. As it turns out, questions that seem intractable for Artin groups are often easier to solve in the monoid case. For example, because all of the relations in the presentation preserve the length of a word, there is an easily computable length function for any given monoid element. Thus the word problem in the monoid is easily solved. Moreover, there are nice normal forms for elements of the monoid \cite{BrieskornSaito, Michel}. However, solving the word problem in Artin groups is much more difficult, {and for many Artin groups, it is not  even known whether such solutions exist.} 

By a theorem of Paris \cite{Paris02}, the natural map from $\AG^+$ to the submonoid of $\AG$ consisting of positive words in $X$ is an isomorphism.  So it is natural to ask if and how one might use information about the monoid to understand the Artin group.  In the case of a spherical type Artin group, a connection between the group and the monoid is obtained via the existence of a Garside element.   Namely there is an element $\Delta$ in the Artin monoid (corresponding to the longest element in the Coxeter group) such that for any $g \in \AG$, $g\Delta^n$ is represented by a positive word in $\AG^+$ for sufficiently large $n$.  In addition, $\Delta^2$ lies in the center of $\AG$.  It follows that most questions about the behavior of the Artin group can be translated into questions about the Artin monoid.  The existence of this Garside element is key to our understanding of spherical type Artin groups. (We refer the reader to \cite{DehornoyDigneGodelleKrammerMichel} and \cite{McCammond} for more details about Garside groups and Garside elements.) 

For infinite type Artin groups, there is no Garside element and no clear way to translate information from the Artin monoid to the Artin group. In this paper, we begin to address this problem by investigating the Cayley graph $\MCay$ of the Artin group $\AG$ with respect to the (infinite) generating set $M:=\AG^+$.  
{We call this the \emph{monoid Cayley graph} for $\AG$.  Geodesics in this Cayley graph correspond to minimal length multi-fractions,  $g=a_1a_2^{-1} \dots a_k^{\pm 1}$, $a_i \in \AG^+$, representing an element $g \in\AG$.  Understanding these geodesics is a first step towards defining normal forms for elements of $\AG$ and offers inductive approaches to answer some questions about these groups.}
In the case of a spherical type Artin group, $\MCay$ is neither interesting nor useful as the Garside structure implies that this graph has diameter two; we can reach any element by traveling along an edge labelled by a single monoid element followed by an edge labelled (forward or backward) by a power of $\Delta$.  
We conjecture that in all other cases, that is for all infinite type Artin groups, $\MCay$ has infinite diameter.  It is this question that we address in this paper.  Surprisingly, it is not at all obvious.  

The first step is to observe that if $A_T$ is a special subgroup of $\AG$ (that is, the subgroup generated by a subset $T \subset X$), then the inclusion of the monoid Cayley graph of $A_T$ into the monoid Cayley graph of $\AG$ is an isometric embedding (see Proposition \ref{Prop:retraction}).  Hence to show that $\MCay$ has infinite diameter it suffices to find a special subgroup for which this holds.  Thus, while we focus primarily on 3-generator Artin groups, our results extend to any Artin group containing one of these groups as a special subgroup.  Our main result is the following.

\begin{theorem*}{\ref{Thm:Main}} Let $\MCay$ denote the Cayley graph of $\AG$ with respect to the generating set $M:=\AG^+$.  Then $\MCay$ has infinite diameter providing one of the following holds.
	\begin{enumerate}
		\item $\G$ contains a pair of vertices not connected by an edge (or equivalently, there exists $i,j$ with $m_{i,j}=\infty$).
		\item $\G$ contains a triangle with all labels $\geq 3$.
		\item $\G$ contains a triangle with no label equal to $3$ and at most one label equal to $2$. 
	\end{enumerate}
\end{theorem*}

Note that the only 3-generator Artin groups of infinite type not covered by this theorem are those with edge labels $\{2,3,n\}$ with $n \geq 6$.   
{After posting this paper, we were informed by Arye Juhasz that he has a proof of the $\{2,3,n\}$ case.  Together with our results, this leaves open only the case of Artin groups all of whose 3-generator special subgroups are spherical type.}
The proof of the theorem involves constructing a sequence of geodesics of increasing length in $\MCay$.  To do this, we first establish some explicit criteria on words in the generators $X$, that allow us to construct such geodesics.  Using solutions to the word problem established  by Holt and the fourth author in the case of large type Artin groups \cite{HoltRees2012} and by Blasco-Garc\'ia, Cumplido and the third author in the case of 3-free Artin groups \cite{BlascoCumplidoMW2022}, 
we then show that these criteria hold for any 3-generator Artin group satisfying one of the conditions above.

While the proof of the main theorem involves combinatorial arguments, the original motivation came from geometric questions. A geometric construction that has played a key role in the study of Artin groups is the Deligne complex $\DG$, which we describe in Section \ref{Section:quasiisometry}.  A central conjecture about Artin groups, known as the $K(\pi,1)$-conjecture, can be reduced to proving that the Deligne complex is contractible.  
In 1972,  Deligne \cite{Deligne72} proved that  this conjecture holds for spherical type Artin groups using the combinatorial information given by the Garside structure. 
The conjecture has subsequently been proved using more geometric arguments for some special classes of infinite type Artin groups (see the survey paper by Paris \cite{Paris14}) but the general conjecture has remained unapproachable.  In a paper by the first three authors, we introduce an analogue of the Deligne complex for the monoid, called the monoid Deligne complex $\DGP$, and prove that this complex is contractible for \emph{all} Artin monoids.  The monoid Deligne complex embeds in the full Deligne complex and its translates cover all of $\DG$.  Our attempt to better understand this covering, and how these translates intersect, led us to the study of the monoid Cayley graph. 

{There is a direct connection between the geometry of the monoid Cayley graph and that of the Deligne complex.} Since $\DGP$  is contractible, attaching a cone to each translate of $\DGP$ in $\DG$ does not change its topology.  In Section \ref{Section:quasiisometry}, we define the ``coned-off Deligne complex", $C\DG$ and prove,
\begin{theorem*}{\ref{Thm:QI}} For any infinite type Artin group $\AG$, $C\DG$ is quasi-isometric to $\MCay$.
\end{theorem*}

The paper is organized as follows.  In Section \ref{Section:monoidCayley}, we introduce the monoid Cayley graph $\MCay$ and prove that  the monoid Cayley graph of a special subgroup isometrically embeds into the monoid Cayley graph of $\AG$.   In Section \ref{Section:quasiisometry}, we investigate the coned-off Deligne complex.  Sections \ref{Section:criteria} and \ref{Section:largetype} contain the proof of the main theorem.  

\subsection*{Acknowledgements}
The authors would like to thank Derek Holt for helpful conversations and suggestions, and the anonymous referee for helpful comments.  The first author was supported by EPSRC Fellowship No.~EP/V043323/2.
The authors would also like to thank ICERM for hosting the 2022 semester on Braids where some discussions for this project took place.

\section{The monoid Cayley graph} \label{Section:monoidCayley}
In this section we introduce some basic notation and discuss the relationship between the monoid Cayley graph of an Artin group and the monoid Cayley graphs of special subgroups.  

A standard tool in geometric group theory for studying a finitely generated group $G$ is the assignment of a metric to $G$. To do this, one chooses a finite generating set $S$ and considers the Cayley graph of $G$ with respect to $S$, denoted $\Cay{G}{S}$.  This is the graph whose vertices are the elements of $G$ and edges join two vertices $g$ and $h$ when they differ by a single element of $S$, that is, $h=gs$ for some $s \in S$.  One can then define a metric on $G$ by taking the distance from $g_1$ to $g_2$ to be the length of a shortest path from $g_1$ to $g_2$; in this Cayley graph; each such path is labelled by a word over $S$ (string over $S \cup S^{-1}$).  Different choices of (finite) generating sets give rise to quasi-isometric metrics, so this metric is ``coarsely" well-defined.  For an Artin group $\AG$ with its standard generating set $X$, we denote this Cayley graph by $\XCay$.

In this paper we shall consider a different type of Cayley graph, namely the Cayley graph with respect to an infinite generating set $M$.  As defined above, $M$ consists of all elements in the Artin monoid $\AG^+$.  The vertices of this Cayley graph, $\MCay$, are again the elements of $\AG$, but two vertices $g,h$ are now joined by an edge whenever $h=gm$ for some $m \in M$.  Note that these edges come with an orientation from $g$ to $h$.  
However, paths in this Cayley graph may follow edges in either direction.  Thus a path from the identity vertex $e$ to a vertex $g$ corresponds to a word over $M$ (string over $M \cup M^{-1}$) representing $g$.  Given a word $w$ over the standard generating set $X$, we can subdivide $w$ into maximal subwords which are all positive or all negative.  This determines a word over $M$ representing the same element of $\AG$.  We will refer to the length of this word, viewed as a path in $\MCay$, as the \emph{monoidal length} of $w$ and denote it by $|w|_M$.

We are interested in understanding the geometry of $\MCay$.  
Here are a few easy observations.  First, if $\AG$ is a spherical type Artin group with Garside element $\Delta$, then any $g \in \AG$ can be translated into the monoid by multiplying by a sufficiently high power of $\Delta$,  that is, there exists $n$ such that $g \Delta^n =m \in M$.  Thus, there is a path from the identity vertex $e$ to $g$ consisting of two edges, the edge from $e$ to $m$, followed by the (reverse of) the edge from $g$ to $m$.  It follows that in this case, the diameter of $\MCay$ is two. Thus, the monoid Cayley graph is of little interest in the spherical type case.  We will restrict our attention to infinite type Artin groups.

\begin{conjecture}\label{Conj:InfDiam}  For any infinite type Artin group, $\MCay$ has infinite diameter.
\end{conjecture}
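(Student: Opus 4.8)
The plan is to reduce the conjecture to small-rank Artin groups and then certify the existence of arbitrarily long geodesics. First I would invoke Proposition~\ref{Prop:retraction}: since the monoid Cayley graph of any special subgroup $A_T$ isometrically embeds in $\MCay$, it suffices to exhibit, inside $\AG$, a special subgroup whose own monoid Cayley graph has infinite diameter. This shifts attention to the ``smallest'' infinite-type special subgroups of $\AG$. Every infinite-type Artin group with at least one pair of non-adjacent vertices already contains a rank-$2$ free special subgroup of infinite type, so the genuinely interesting reduction is to the infinite-type $3$-generator groups arising as triangular subgraphs of $\G$.

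For these rank-$3$ pieces the strategy is to build an explicit sequence $(g_k)$ of elements of $\AG$ whose distance from the identity in $\MCay$ grows without bound. Since a geodesic from $e$ to $g$ of length $k$ is exactly a minimal-length multi-fraction $g=a_1a_2^{-1}\cdots a_k^{\pm1}$ with $a_i\in\AG^+$, the task is to produce words over $X$ whose monoidal length $|w|_M$ cannot be reduced below a prescribed bound. I would formulate combinatorial criteria on a word $w$ — governing which generators its maximal positive and negative syllables must begin and end with, and showing that the defining relations cannot merge adjacent syllables — that force $|w|_M$ to stay large. The key point, and the main obstacle, is certifying that such a multi-fraction is genuinely geodesic: this is precisely a cancellation/word-problem question, and it is here that one must import external input, namely the solutions to the word problem of Holt--Rees for large-type groups and of Blasco-Garc\'ia--Cumplido--Morris-Wright for $3$-free groups. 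Verifying that these criteria are compatible with such normal forms, across all the label patterns in Theorem~\ref{Thm:Main}, is the technical heart of the argument.

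The serious difficulty is the general case, because the reduction above has a hard limit: there exist infinite-type Artin groups all of whose $3$-generator special subgroups are spherical type (the higher-rank affine examples), so no small-rank infinite-type parabolic is available to embed. For these I would abandon the purely combinatorial route and argue geometrically through Theorem~\ref{Thm:QI}: since $\MCay$ is quasi-isometric to the coned-off Deligne complex $C\DG$, it is enough to show that $C\DG$ has infinite diameter. This in turn requires understanding how the translates of the monoid Deligne complex $\DGP$ cover $\DG$, and showing that coning them off does not collapse the space to finite diameter --- equivalently, that no bounded set of cone points can be reached from every vertex in boundedly many cone-to-cone hops. I expect this to be the true bottleneck: without either a word-problem solution or fine control of the $\DGP$-covering, there is no current mechanism to certify that long geodesics survive, which is exactly why the statement remains a conjecture rather than a theorem.
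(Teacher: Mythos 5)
The statement you were asked to prove is a \emph{conjecture}: the paper itself does not prove it, and explicitly leaves it open. What the paper actually establishes is Theorem~\ref{Thm:Main}, which settles the conjecture only when $\G$ contains a missing edge, a triangle with all labels $\geq 3$, or a triangle with no label $3$ and at most one label $2$. For those cases your outline tracks the paper's argument closely: the reduction via Proposition~\ref{Prop:retraction} and Corollary~\ref{Cor:subgroups} to $3$-generator special subgroups, the identification of geodesics in $\MCay$ with minimal-length multi-fractions, the formulation of combinatorial criteria on words (the paper's ``preserved signed suffixes'' and ``alternating blocking sequences'' of Section~\ref{Section:criteria}), and the import of the Holt--Rees and Blasco-Garc\'ia--Cumplido--Morris-Wright word-problem machinery to verify those criteria. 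So the part of your proposal that can be made to work is essentially the paper's proof of its partial result, and you are right that this is where the technical weight sits.

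Two corrections on where the gap actually lies. First, the reduction to rank $3$ fails earlier than you say: the triangles labelled $(2,3,n)$ with $n\geq 6$ are already infinite type, yet they are neither large type nor $3$-free, so they escape both word-problem solutions; the conjecture is open even for these $3$-generator groups (the paper notes a claimed proof by Juhasz, not included). Second, your geometric fallback for groups all of whose $3$-generator parabolics are spherical is circular: Theorem~\ref{Thm:QI} says $C\DG$ is quasi-isometric to $\MCay$, so ``show $C\DG$ has infinite diameter'' is not an independent route but a restatement of the conjecture in different coordinates. It may suggest useful intuition about how translates of $\sDGP$ overlap, but it supplies no mechanism for certifying that long geodesics survive, which is the same bottleneck you identified on the combinatorial side. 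In short: your proposal is an accurate roadmap of the paper's partial progress together with an honest acknowledgment that the general statement is unproved, but it is not, and does not claim to be, a proof of the conjecture.
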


This conjecture will be the main focus of this paper.  Studying the diameter involves understanding geodesics in this Cayley graph. These are of interest in their own right.  Note that any two vertices connected by a path all of whose edges have the same orientation will also be connected by a single edge, since the product of monoid elements is again a monoid element.  Thus, any geodesic from $e$ to $g$ in $\MCay$ must be an alternating path, or, in other words, an expression  of the form $g=a_1 a_2^{-1} a_3 \dots a_k^{\pm 1}$ (or $g=a_1^{-1} a_2 a_3^{-1} \dots a_k^{\pm 1}$) with $a_i \in M$.  

In this paper, we shall focus on the case of 3-generator Artin groups.  However, as we will now see, this has immediate implications for higher rank Artin groups.  Let $\AG$ be an Artin group with standard generating set $X$.  A \emph{special subgroup} of $\AG$ is a subgroup  $A_T$ generated by a subset $T \subseteq X$.  By a theorem of van der Lek \cite{vdL83}, this group is itself an Artin group with defining graph the full subgraph of $\G$ spanned by $T$.  

\begin{proposition}\label{Prop:retraction} Let $\AG$ be an Artin group with generating set $X$ and let $T \subset X$ be any non-empty subset.  Let $M_T$ be the positive monoid in $A_T$.  There is a simplicial retraction of $\MCay$ onto $\Cay{A_T}{M_T}$.  Hence the natural inclusion of $\Cay{A_T}{M_T}$ into $\MCay$ is an isometric embedding, that is, it maps geodesics to geodesics.
\end{proposition}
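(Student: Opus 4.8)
The plan is to produce the retraction first and then extract the isometric statement formally. Note that the inclusion $i\colon \Cay{A_T}{M_T}\hookrightarrow\MCay$ carries edges to edges, hence is distance non-increasing, and any simplicial retraction $r$ with $r\circ i=\mathrm{id}$ is also distance non-increasing. For $g,h\in A_T$ this yields $d_{M_T}(g,h)\ge d_M(g,h)\ge d_{M_T}(r(g),r(h))=d_{M_T}(g,h)$, so all three quantities are equal; applying the same inequalities along a geodesic of $\MCay$ between $g$ and $h$ shows that $r$ sends it to a path of the same length in $\Cay{A_T}{M_T}$, i.e.\ to a geodesic. Thus everything reduces to constructing a simplicial retraction $r\colon\MCay\to\Cay{A_T}{M_T}$.

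For the construction I would use the divisibility theory of the Artin monoid. First, every $a\in\AG^+$ has a well-defined \emph{support} $\operatorname{supp}(a)\subseteq X$, the set of generators occurring in any positive word representing $a$; this is an invariant because the defining relations are homogeneous and letter-preserving, and it gives $\AG^+\cap A_T=M_T$ together with the fact that $M_T$ is closed under passing to left and right divisors. Second, $\AG^+$ is cancellative and any two elements admitting a common right multiple admit a right-lcm \cite{BrieskornSaito, Michel}; by van der Lek's theorem \cite{vdL83} these lattice operations are compatible with the inclusion $M_T\hookrightarrow\AG^+$. Since the divisors of a fixed $a$ form a finite set closed under $\lrm$, it follows that $a$ has a unique maximal left divisor $\lambda(a)\in M_T$ and, symmetrically, a unique maximal right divisor $\rho(a)\in M_T$. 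I would then define $r$ to be $A_T$-equivariant for the left multiplication action (which is by simplicial automorphisms preserving the subgraph), fixing its value on an alternating multifraction $g=a_1a_2^{-1}a_3\cdots a_k^{\pm1}$ by projecting the syllables onto their maximal $T$-divisors from the end nearest the identity and absorbing the out-of-$T$ remainders, so that each elementary step of the word changes the $A_T$-value by at most one element of $M_T$.

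The two things to prove, and the heart of the argument, are that this assignment is (a) independent of the chosen multifraction, hence a genuine map $r\colon\AG\to A_T$ fixing $A_T$ pointwise, and (b) simplicial: if $h=gm$ is an edge then $r(g)$ and $r(h)$ coincide or differ by a single element of $M_T$. I expect (a) to be the main obstacle. It amounts to checking invariance of the projection under the two moves relating multifractions for the same group element---an Artin or monoid relation applied inside a syllable, and a free cancellation $cc^{-1}=e$ in which the cancelling letters may lie outside $T$. The relation moves are controlled by the compatibility of $\lambda,\rho$ with $\lrm$, while the cancellation move is the delicate case, since a generator outside $T$ can interact through braid relations with $T$-letters before it cancels; the support invariant is precisely what guarantees that such a generator never contributes to $\lambda$ or $\rho$ and so leaves $r$ unchanged. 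Once (a) and (b) are in place, $r$ is the desired retraction and the isometric embedding follows as above.
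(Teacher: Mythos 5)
Your opening reduction is correct and is exactly the paper's logic: once a simplicial retraction $r\colon\MCay\to\Cay{A_T}{M_T}$ exists, the chain $d_{M_T}(g,h)\ge d_M(g,h)\ge d_{M_T}(r(g),r(h))=d_{M_T}(g,h)$ forces the inclusion to be isometric and to carry geodesics to geodesics. The gap is that the construction of the retraction is the entire content of the proposition, and you leave its two essential properties --- well-definedness on the group and simpliciality --- explicitly unproven, while the sketch you offer for them does not go through as stated. The map $a\mapsto\lambda(a)$ assigning to a positive element its maximal left divisor in $M_T$ does exist (this comes from the divisibility theory of parabolic submonoids, not from van der Lek's theorem, which concerns injectivity of $A_T\to\AG$), but it is not multiplicative: the maximal $T$-divisor of a product is not determined by the maximal $T$-divisors of the factors. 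Consequently ``projecting the syllables and absorbing the out-of-$T$ remainders'' is not even a well-specified operation on a single multifraction, let alone one invariant under change of multifraction. Two words over $X\cup X^{-1}$ representing the same element of $\AG$ are related by defining relations \emph{and} by free insertions of $xx^{-1}$ or $x^{-1}x$; an insertion can split or merge syllables and thereby alter every subsequent projection, and your support argument only controls relations applied inside a single positive syllable --- it says nothing about how a generator outside $T$, introduced by an insertion, interacts with the $T$-content of adjacent syllables before it cancels. Verifying this invariance is precisely the hard theorem you would need, and for a general Artin group (where even the word problem is open) it cannot be dismissed as routine bookkeeping.

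The paper sidesteps all of this by quoting prior work: Godelle and Paris \cite{GP12} construct a retraction of the Salvetti complex of $\AG$ onto that of $A_T$, and Charney and Paris \cite{CP14} show that on universal covers the induced map of $\XCay$ sends each oriented edge either to a vertex or to an edge with the same orientation. Positive paths therefore go to positive paths, so the map descends to a simplicial retraction of $\MCay$ onto $\Cay{A_T}{M_T}$, and the isometry statement follows as in your first paragraph. If you want a self-contained argument along your lines, you would in effect be reproving Theorem~1.2 of \cite{CP14}; you should either carry out that verification in full or cite it.
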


\begin{proof}This follows from work of Godelle and Paris \cite{GP12} in which they construct a retraction of the Salvetti complex of $\AG$ to the Salvetti complex of $A_T$.  Subsequently, Charney and Paris \cite{CP14} studied the universal cover of this retraction in more detail.  The 1-skeleton of the universal cover of the Salvetti complex of $\AG$ is precisely the Cayley graph of $\AG$ with respect to the standard generating set $X$. Edges come with a preferred orientation, namely the orientation corresponding to the positive power of a generator in $X$.   In the proof of Theorem 1.2 of \cite{CP14}, they give a precise description of this retraction: the retraction either collapses an edge to a single point, or maps it to an edge with the same orientation.  It follows that it takes any strictly positive path, corresponding to a monoid element in $\AG$, to a strictly positive path corresponding to a monoid element in $A_T$.  Thus, it induces a retraction of $\MCay$ onto $\Cay{A_T}{M_T}$  taking each edge, labelled by an element of $M$, to either a single vertex or a single edge labelled by an element of $M_T$. In particular, if $\gamma$ is an edge path in $\MCay$ connecting two vertices of $\Cay{A_T}{M_T}$, then its image under the retraction is an edge path of the same or shorter length in  $\Cay{A_T}{M_T}$ between these vertices.  It follows that geodesic paths in $\Cay{A_T}{M_T}$ must also be geodesic in $\MCay$.
\end{proof}

\begin{corollary}\label{Cor:subgroups} If $\AG$ contains a special subgroup $A_T$ whose monoid Cayley graph has infinite diameter, then the same holds for the monoid Cayley graph of $\AG$.  
\end{corollary}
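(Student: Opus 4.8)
The plan is to read this off directly from Proposition \ref{Prop:retraction}, whose content is exactly that the inclusion $\Cay{A_T}{M_T} \hookrightarrow \MCay$ is an isometric embedding. The guiding principle is that an isometric embedding preserves distances between vertices, while ``infinite diameter'' is nothing but the assertion that one can find pairs of vertices whose distance exceeds any prescribed bound; so a supply of far-apart vertices in the subgraph furnishes far-apart vertices in the whole graph.

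First I would unpack the hypothesis. Saying that $\Cay{A_T}{M_T}$ has infinite diameter means that for every $N \in \N$ there exist vertices $g_N, h_N \in A_T$ with $d_{M_T}(g_N, h_N) \geq N$, where $d_{M_T}$ denotes the path metric in $\Cay{A_T}{M_T}$. (Since this Cayley graph is vertex-transitive under left multiplication, one could even normalize $h_N = e$, but this is not needed.) Next, I would view each pair $g_N, h_N$ as a pair of vertices of $\MCay$ through the inclusion, and invoke Proposition \ref{Prop:retraction}: the inclusion is an isometric embedding, so $d_M(g_N, h_N) = d_{M_T}(g_N, h_N) \geq N$, where $d_M$ is the path metric on $\MCay$. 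Thus for every $N$ there are vertices of $\MCay$ at distance at least $N$, which is precisely the statement that $\MCay$ has infinite diameter.

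There is no genuine obstacle here, since all the work has been done in Proposition \ref{Prop:retraction}; the only point deserving a moment's care is to ensure that ``isometric embedding'' is used in the strong sense of exactly preserving distances, rather than merely as a quasi-isometric embedding. What we actually need is just the inequality $d_M(g,h) \geq d_{M_T}(g,h)$ for $g,h \in A_T$, and this is immediate from the feature of the retraction highlighted in the proof of the proposition, namely that it never increases the length of an edge path. Hence a large lower bound on a distance in $\Cay{A_T}{M_T}$ transfers to the same lower bound in $\MCay$, completing the argument.
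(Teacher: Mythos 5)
Your argument is correct and matches the paper's intent exactly: the corollary is stated as an immediate consequence of Proposition \ref{Prop:retraction}, and you deduce it in precisely that way, correctly identifying that the needed inequality $d_M(g,h)\geq d_{M_T}(g,h)$ comes from the length-nonincreasing retraction. Nothing is missing.
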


As an example, let $T=\{s,t\}$ be two generators not connected by an edge, so $A_T$ is the free group on $\{s,t\}$.   It is easy to see that the word $st^{-1}st^{-1} \dots st^{-1}$ of length $2m$ is geodesic in $\Cay{A_T}{M_T}$  .  It then follows from the corollary that for any $\G$ whose underlying graph is not a clique $\MCay$ has infinite diameter.

\section{Relationship to the monoid Deligne complex} \label{Section:quasiisometry}
In this section we introduce the coned off Deligne complex (which we note here is different from the coned off Deligne complex defined in the paper of Martin and Przytycki \cite{MartinPrzytycki}) and show that the resulting complex is quasi-isometric to $\MCay$.

We first give a bit of history.  Associated to any Coxeter group $W_\G$ is a complex hyperplane complement on which $W_\G$ acts freely.  It is known that the quotient of this space by $W_\G$ has fundamental group $\AG$.  The K($\pi$,1)-conjecture states that, in fact, this quotient space is a K($\AG$,1)-space.  In 1972, Deligne \cite{Deligne72} proved that this holds for spherical type Artin groups, by constructing a simplicial complex homotopy equivalent to the universal cover of this hyperplane complement and proving that this simplicial complex is contractible.  In \cite{CD95}, the second author and M.~Davis constructed a slight variation on this complex for infinite-type Artin groups, now known as the Deligne complex, which plays an analogous role for infinite-type Artin groups.  The K($\pi$,1)-conjecture is equivalent to the conjecture that the Deligne complex is contractible for all $\AG$. In \cite{CD95} the conjecture was proved in some special cases (such as for FC type) and more recently, it was proved for affine type  Artin groups by Paolini and Salvetti \cite{PS21}, but it remains open in general.

To define the Deligne complex, first note that a special subgroup $A_T$ of an infinite type Artin group $\AG$ may have either spherical type or infinite type.  The Deligne complex $\DG$  is the geometric realization of the partially ordered set $\mathcal P$ of cosets $gA_T$, where $A_T$ is a spherical type special subgroup and the ordering is given by inclusion.  {(Note that we allow $T=\emptyset$ and define $gA_\emptyset=\{g\}$.)} There are two natural piecewise Euclidean metrics that we can put on $\DG$.  The first is obtained by using the simplicial structure and declaring each simplex to be a standard Euclidean simplex with all edges of length 1.  We denote the Deligne complex with this metric by $\sDG$.  An alternate metric is obtained by viewing $\DG$ as a cube complex.  Here, the cubes correspond to intervals $[gA_T, gA_R]$ in the poset $\mathcal{P}$ and the metric assigns each k-cube to be isometric to the unit cube $I^k$ in $\R^k$.  These two metrics on $\DG$ are quasi-isometric since the maximal dimension of cubes in the Deligne complex is finite, so viewing the cube as a union of simplicies distorts the metric by a bounded amount.  We will use the simplicial structure and metric defined on $\sDG$ as it is more convenient for our arguments.  

We can also define an analogue of this space for the Artin monoid.  Namely, we define $\sDGP$ to be the geometric realization of the subposet of $\mathcal P$ consisting of cosets $mA_T$ where $m \in \AG^+=M$.  In \cite{BCMW22}, the first three authors prove that $\sDGP$ is contractible for all infinite type $\AG$. 
The complex  $\sDGP$ naturally embeds into $\sDG$. 
There is an action of~$\AG$ on $\sDG$, via left multiplication on the cosets~$g\cdot m A_T=gmA_T$.
We call the image of~$\sDGP$ under the action of~$g\in \AG$ a \emph{translate} of~$\sDGP$ and denote it by~$g\sDGP$.

\begin{defn}
	Define the \emph{coned off Deligne complex} $C\DG$ to be the simplicial Deligne complex~$\sDG$ with a `cone point' added for each translate of~$\sDGP$, in the following sense: for all~$g \in \AG$, add
	\begin{enumerate}
		\item a vertex $v_g$ 
		\item an edge between every vertex in~$g\sDGP$ and the vertex~$v_g$
		\item a~$p+1$ simplex~$v_g*\sigma_p$ for every~$p$-simplex~$\sigma_p\in g\sDGP$.
	\end{enumerate}
\end{defn}

This is again a simplicial complex, and we put the standard metric on each simplex, such that all edges have length 1.

One motivation for defining this coned off complex, is that we know that the monoid Deligne complex is contractible, as are its translates. Therefore coning them off does not change the homotopy type. \emph{i.e.}:
\[
C\DG\simeq \sDG.
\]

We now define a map from vertices in~$\MCay$ to vertices in~$C\DG$ as follows:
\[f:\MCay \to C\DG; \, g \mapsto v_g \] 

\begin{theorem}\label{Thm:QI}
	The above map~$f$ is a quasi-isometry, {i.e.}~$\MCay\simeq_{q.i.}C\DG$.
\end{theorem}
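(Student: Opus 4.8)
The plan is to establish a quasi-isometry by checking the standard criterion: the map $f$ is coarsely surjective (its image is quasi-dense), and it distorts distances by a bounded multiplicative and additive amount in both directions. Since both $\MCay$ and $C\DG$ are graphs (or have natural graph metrics on their $1$-skeleta), I would work at the level of combinatorial distance and produce explicit bounds relating a path in one complex to a path in the other.

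\textbf{Coarse surjectivity.} First I would show every vertex of $C\DG$ lies within bounded distance of the image $\{v_g : g \in \AG\}$. The vertices of $C\DG$ are of two kinds: the cone points $v_g$, which are exactly the image of $f$, and the original vertices of $\sDG$, namely cosets $gA_T$ with $A_T$ spherical. By construction, each coset $gA_T$ lies in some translate $h\sDGP$ (indeed $gA_T \in g\sDGP$ since $eA_T = A_\emptyset \cdot A_T$ is in $\sDGP$, so $gA_T \in g\sDGP$), hence is joined by a single cone-edge to $v_g$. Thus every vertex of $C\DG$ is at distance at most $1$ from the image of $f$, giving coarse surjectivity with constant $1$.

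\textbf{Distance comparison.} The heart of the argument is to compare $d_{\MCay}(e,g)$ with $d_{C\DG}(v_e, v_g)$. In one direction, an edge in $\MCay$ from $g$ to $gm$ (with $m \in M$) means both $g\sDGP$ and $gm\sDGP$ contain a common coset: since $m A_\emptyset = \{m\}$ and the identity coset $A_\emptyset = \{e\}$ both appear, I would check that the translates $g\sDGP$ and $gm\sDGP$ share a vertex, so $v_g$ and $v_{gm}$ are joined through that shared vertex by a path of length $2$ in $C\DG$. This shows $d_{C\DG}(v_e, v_g) \le 2\, d_{\MCay}(e,g)$. Conversely, I would argue that a geodesic edge-path in $C\DG$ between cone points can be replaced, up to bounded length distortion, by a path that alternately passes through cone points $v_{h_1}, v_{h_2}, \dots$; consecutive cone points on such a path correspond to translates $h_i\sDGP$ and $h_{i+1}\sDGP$ that intersect (share a vertex), and I must translate ``$h_i\sDGP \cap h_{i+1}\sDGP \neq \emptyset$'' into ``$h_{i+1} = h_i m^{\pm 1}$ for some $m \in M$,'' yielding a path of comparable length in $\MCay$.

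\textbf{The main obstacle} is precisely this last translation: showing that two translates of the monoid Deligne complex intersect \emph{if and only if} (up to bounded error) the corresponding group elements differ by a monoid element, together with controlling how a general $C\DG$-geodesic (which may wander through the fine simplicial structure of $\sDG$ rather than hopping cleanly between cone points) can be coarsened to such a cone-point path. I expect to need a lemma characterizing nonempty intersection $g\sDGP \cap h\sDGP$ in terms of the existence of a common coset $kA_T$ with $k \in g M \cap h M$, and to use contractibility of $\sDGP$ (which guarantees each cone is ``small'' in the metric sense, so that entering and leaving a single translate costs bounded distance). The finite dimension of the Deligne complex and the boundedness of simplex diameters will keep all additive constants under control; the genuinely delicate point is the combinatorial dictionary between translate-intersection patterns and multiplication by monoid elements.
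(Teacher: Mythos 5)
Your overall architecture matches the paper's proof: coarse surjectivity via the covering of $\sDG$ by translates of $\sDGP$, the $2$-Lipschitz bound in one direction by routing an $\MCay$-edge $g \to gm$ through the vertex $gA_\emptyset = g\cdot m' A_\emptyset$ shared by the two cones, and, in the other direction, coarsening a $C\DG$-geodesic to an edge path (with a constant depending on the finite dimension of $\sDG$) that hops between cone points. However, the step you yourself flag as ``the main obstacle'' --- converting adjacency of translates $g_i\sDGP$ and $g_{i+1}\sDGP$ into bounded $\MCay$-distance between $g_i$ and $g_{i+1}$ --- is exactly the crux, and your proposed resolution is not quite right. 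You aim for a lemma producing a common coset $kA_T$ with $k \in g_iM \cap g_{i+1}M$; but a shared vertex of the two translates is a coset equality $g_i m_i A_T = g_{i+1} m_{i+1} A_T$, which only gives $m_i^{-1}g_i^{-1}g_{i+1}m_{i+1} \in A_T$, not a common representative in $g_iM \cap g_{i+1}M$. The missing ingredient is that $A_T$ is of \emph{spherical type} (being a vertex of the Deligne complex), hence has a Garside element, so every element of $A_T$ can be written as $a_i b_i^{-1}$ with $a_i, b_i \in A_T^+$. This yields $g_i m_i a_i = g_{i+1} m_{i+1} b_i$ with $m_i a_i,\, m_{i+1} b_i \in M$, i.e.\ a path of length at most $2$ (not $1$, as your ``$h_{i+1} = h_i m^{\pm 1}$'' suggests) in $\MCay$ from $g_i$ to $g_{i+1}$ through the vertex $g_i m_i a_i$. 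Without invoking the Garside structure of the spherical special subgroups, the dictionary does not close and the lower bound $d_{\MCay}(x,y) \le 2B\, d_{C\DG}(f(x),f(y))$ is not established.

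A secondary, more minor point: you appeal to contractibility of $\sDGP$ to guarantee that ``each cone is small in the metric sense.'' Contractibility is irrelevant to the quasi-isometry (it only ensures that coning off does not change the homotopy type of $\sDG$); the metric smallness of each coned translate is immediate from the construction, since every vertex of $g\sDGP$ is joined by a single edge to the apex $v_g$. Your handling of the coarsening of a general geodesic to an edge path, and of the additive and multiplicative constants coming from finite dimension, is consistent with the paper's argument.
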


\begin{proof}
	First, note that because the translates of~$\sDGP$ by~$\AG$ cover~$\sDG$, any point in~$\sDG$ is in some translate~$g\sDGP$, and therefore at most distance one from the cone point for that translate, $v_g$, in~$C\DG$. Thus every point in~$C\DG$ is at most distance one from a point in~$f(\MCay)$.
	
	First, we claim that if~$g$ and~$h$ are joined by an edge in~$\MCay$ then~$f(g)=v_g$ and~$f(h)=v_h$ are joined by an edge path of length 2 in~$C\DG$. Without loss of generality we can assume the edge is directed from~$h$ to~$g$, so that~$g=h\cdot m$ for some~$m\in M$. Now~$f(g)=v_g$ is joined by an edge to~$gA_\emptyset$, and $gA_\emptyset=h\cdot m A_\emptyset$ is in~$h\sDGP$, so it is joined by an edge to~$v_h=f(h)$. We have therefore constructed a path of length 2 from~$f(g)$ to $f(h)$ as claimed.

	Since any geodesic in~$\MCay$ is a path of such edges, for~$x$ and~$y$ in~$\MCay$ we deduce that~
	\[d_{C\DG}(f(x),f(y))\leq 2\cdot d_{Cay}(x,y).\] 
	In particular this means that for any~$A\in\R$ such that~$A\geq 2$ we have~ $d_{C\DG}(f(x),f(y))\leq A\cdot d_{Cay}(x,y)$. It remains to show that~$d_{Cay}(x,y)\leq A \cdot d_{C\DG}(f(x),f(y))$ for some $A$.
	
	\begin{figure}[h!]\label{figure:quasiiso}
		\begin{center}
		\begin{tikzpicture}[scale=1.2]
			\foreach \x in {0,1,2,3,5,6,7}
			\draw[fill=black] (\x,0) circle [radius=0.1];
			\draw[thick] (0,0) -- (3.3,0) (4.7,0)--(7,0);
			\foreach \x in {3.7,4,4.3}
			\draw[fill=black] (\x, 0) circle [radius=0.02];
			
			\draw[->] (1.3,1) --(1,0.2);
			\draw (1.3,1.2) node {$\scriptstyle{xm_1A_T=g_2m_2A_T}$};
			\draw[->] (6.3,1) --(6,0.2);
			\draw (6.3,1.2) node {$\scriptstyle{g_{k-1}m_{k-1}A_T=ym_kA_T}$};

			\draw (-2,0) node {$\scriptstyle{\text{Edge path }\gamma' \in C\DG}$};
			\draw (-0.2,0.2) node{$\scriptstyle{v_x}$} (7.2,0.15) node{$\scriptstyle{v_y}$}(0.5,0.15) node{$\scriptstyle{e_1}$}(1.5,0.15) node{$\scriptstyle{e_2}$} (2.5,0.15) node{$\scriptstyle{e_3}$}(5.5,0.15) node{$\scriptstyle{e_{k-1}}$}(6.5,0.15) node{$\scriptstyle{e_k}$};
			
			\draw[thin,gray] (0,-2) -- (3.3,-2) (4.7,-2)--(7,-2);
			\foreach \x in {0,1,2,3,5,6,7}
			\draw[fill=black] (\x,-2) circle [radius=0.1];
			\foreach \x in {3.7,4,4.3}
			\draw[gray, fill=gray] (\x, -2) circle [radius=0.02];
			
			\foreach \x in {1.5,2.5,5.5}
			\draw[fill=black] (\x,-3) circle [radius=0.1];
			\draw[thick] (0,-2) -- (1,-2) --(1.5,-3)--(2,-2)--(2.5,-3)--(3,-2)--(3.2,-2.4) (4.8,-2.4)--(5,-2)--(5.5,-3)--(6,-2)--(7,-2);
			\foreach \x in {3.7,4,4.3}
			\draw[fill=black] (\x, -2.6) circle [radius=0.02];
			
			\draw (-2,-2) node {$\scriptstyle{\text{Modified path } \in C\DG}$};
			\draw (-0.2,-1.8) node{$\scriptstyle{v_x}$} (7.2,-1.85) node{$\scriptstyle{v_y}$}(0.5,-1.85) node{$\scriptstyle{e_1}$}(1.5,-1.85) node[gray]{$\scriptstyle{e_2}$} (2.5,-1.85) node[gray]{$\scriptstyle{e_3}$}(5.5,-1.85) node[gray]{$\scriptstyle{e_{k-1}}$}(6.5,-1.85) node{$\scriptstyle{e_k}$};
			
			\draw (1.5,-3.3) node{$\scriptstyle{v_{g_2}}$} (2.5,-3.3) node{$\scriptstyle{v_{g_3}}$} (5.5,-3.3) node{$\scriptstyle{v_{g_{k-1}}}$} (1,-2.5) node{$\scriptstyle{d^1_2}$} (1.9,-2.6) node{$\scriptstyle{d^2_2}$} (5,-2.7) node{$\scriptstyle{d^1_{k-1}}$} (6.1,-2.7) node{$\scriptstyle{d^2_{k-1}}$};

			\foreach \x in {0,1,3,4,6,7}
			\draw[fill=black] (\x,-5) circle [radius=0.1];
			
			\foreach \x in {0.5,3.5,6.5}
			\draw[fill=black] (\x,-6) circle [radius=0.1];
			\draw[thick] (0,-5)-- (0.5,-6)-- (1,-5) --(1.2,-5.4)(2.8,-5.4)--(3,-5)--(3.5,-6)--(4,-5)--(4.2,-5.4) (5.8,-5.4)--(6,-5)--(6.5,-6)--(7,-5);
			\foreach \x in {1.7,2,2.3,4.7,5,5.3}
			\draw[fill=black] (\x, -5.6) circle [radius=0.02];
			\draw[thick, ->] (0,-5)--(0.25,-5.5) ;
			\draw[thick, ->](1,-5)--(0.75,-5.5) ;
			\draw[thick, ->] (3,-5)--(3.25,-5.5) ;
			\draw[thick, ->](4,-5)--(3.75,-5.5) ;
			\draw[thick, ->] (6,-5)--(6.25,-5.5) ;
			\draw[thick, ->](7,-5)--(6.75,-5.5) ;
			
			\draw (-2,-5) node[align=center] {$\scriptstyle{\text{Corresponding path }}$\\$\scriptstyle{ \in \MCay}$};
			\draw (-0.1,-4.8) node{$\scriptstyle{x}$} (7.1,-4.8) node{$\scriptstyle{y}$}(1,-4.8) node{$\scriptstyle{g_2}$} (3,-4.8) node{$\scriptstyle{g_i}$}(4,-4.8) node{$\scriptstyle{g_{i+1}}$}(6,-4.8) node{$\scriptstyle{g_{k-1}}$};
			
			\draw (0.5,-6.5) node[align=center]{$\scriptstyle{xm_1a_1}$\\$\scriptstyle{=g_2m_2b_1}$} (3.5,-6.5) node[align=center]{$\scriptstyle{g_im_ia_i}$\\$\scriptstyle{=g_{i+1}m_{i+1}b_i}$} (6.5,-6.5) node[align=center]{$\scriptstyle{g_{k-1}m_{k-1}a_k}$\\$\scriptstyle{=ym_kb_k}$} (0,-5.6) node{$\scriptstyle{m_1a_1}$} (1,-5.6) node{$\scriptstyle{m_2b_1}$} (3,-5.6) node{$\scriptstyle{m_ia_i}$} (4.1,-5.7) node{$\scriptstyle{m_{i+1}b_{i}}$} (5.9,-5.7) node{$\scriptstyle{m_{k-1}a_k}$} (7.1,-5.7) node{$\scriptstyle{m_kb_k}$};
		\end{tikzpicture}
	\end{center}
		\caption{The edge path~$\gamma'$ in $C\DG$ joining $v_x$ to $v_y$, the corresponding modified edge path in $C\DG$ (with $\gamma'$ shown in grey) and the path in~$\MCay$ from $x$ to~$y$.}
	\end{figure}
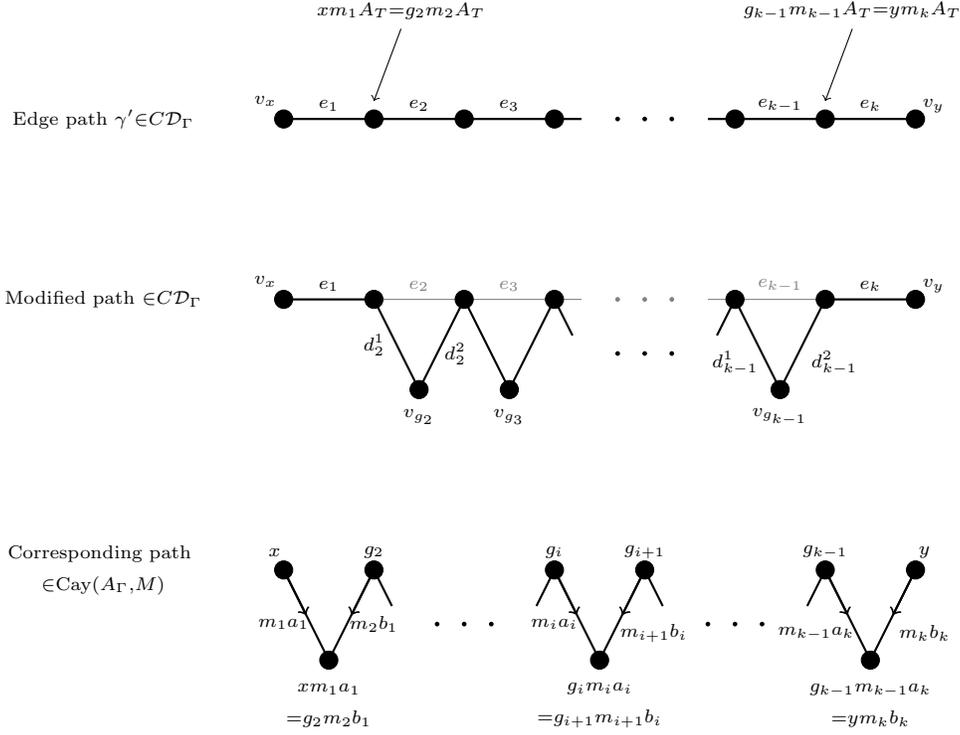
	
	For the following argument, refer to Figure 1. 
	Start with a geodesic~$\gamma$ between~$f(x)$ and~$f(y)$ in~$C\DG$. {We can deform $\gamma$ into an edge path~$\gamma'$ from $x$ to $y$ contained in the same simplices as $\gamma$.}  Since the maximal dimension of the simplices in~$\sDG$ is finite, there exists some constant~$B\geq1$, {depending only on this dimension}, such that~$B\lg(\gamma)\geq \lg(\gamma')$, where $\lg(\,)$ denotes the length of the path in~$C\DG$.  Label the edges of~$\gamma'$ by~$e_1,\ldots, e_k$.
	Each edge~$e_i$ of~$\gamma'$ is either an edge to or from a cone point, or an edge in~$\sDG$.  Since the translates of~$\sDGP$ cover~$\sDG$, any edge in~$\sDG$ lies in some translate of~$\sDGP$. Without loss of generality, we may assume that the only edges with cone point vertices are the first and last (with vertices~$v_x$ and $v_y$), as otherwise we can apply this proof to every section of the edge path between cone point vertices. Then, except for~$e_1$ and~$e_k$, every edge lies in a translate of~$\sDGP$ by an element~$g_i\in \AG$, for~$2\leq i\leq k-1$. We replace the edge~$e_i$ with the two edges $d^1_i$, $d^2_i$, with~$d^1_i$ travelling from the initial vertex of~$e_i$ to the cone point~$v_{g_i}$, and~$d^2_i$ from the cone point to the terminal vertex of~$e_i$. Edges~$e_i$ and~$e_{i+1}$ meet at a vertex in the overlap of the translates~$g_i\sDGP$ and~$g_{i+1}\sDGP$, \emph{i.e.}~the vertex satisfies~$g_im_iA_T=g_{i+1}m_{i+1}A_T$ for~$m_i$ and~$m_{i+1}$ in~$M$. Since~$A_T$ is of spherical type, this means in particular that there exist~$a_i, b_i\in A_T^+$ such that
	\[
	g_im_ia_ib_i^{-1}=g_{i+1}m_{i+1} \implies g_im_ia_i=g_{i+1}m_{i+1}b_i.
	\]
	In~$\MCay$, the vertices~$g_i$ and~$g_{i+1}$ are therefore joined by an edge path of length at most 2. 
	Similarly, the vertices~$x$ and $g_2$ are joined by an edge path of length 2, as are the vertices~$g_{k-1}$ and~$y$. This is because, for example,~$v_x$ is joined by the edge $e_1$ to a vertex which also lies in the edge~$e_2$, and thus in~$g_2\sDGP$. So this vertex satisfies~$xm_1A_T=g_{2}m_{2}A_T$ for~$m_1$ and~$m_{2}$ in~$M$ and as before we can argue that this corresponds to an edge path of length 2.
	From the modified path in~$C\DG$, we are therefore able to construct a path between~$x$ and~$y$ in~$\MCay$, as shown in Figure \ref{figure:quasiiso}.	{It follows that 
		\[
		d_{\MCay}(x,y)\leq 2k-2 \leq 2k = 2\lg(\gamma'),
		\]
		and hence 
		\[
		d_{\MCay}(x,y)\leq 2B\lg(\gamma)=2B \cdot d_{C\DG}(f(x),f(y)).
		\]}
	Taking~$A=2B\geq 2$ this completes the proof.
\end{proof}

\section{criteria for infinite diameter} \label{Section:criteria}

	In this section, we will introduce two criteria which an Artin group may or may not satisfy, and prove that if an Artin group $\AG$ satisfies both of these criteria then as a consequence $\MCay$ has infinite diameter. 
	To illustrate the criteria, we will include some simple examples of Artin groups which satisfy them  
	and Artin groups which do not. 
	Further examples appear in \Cref{Section:largetype}, where we prove that large type and 3-free Artin groups of infinite type satisfy the two criteria.

	Assuming an Artin group $\AG$ satisfies the two criteria, our strategy to prove that $\MCay$ has infinite diameter is as follows. We  start with a particular sequence of words $w_1,w_2,\dots ,w_n,\dots$ over the standard generating set~$X$, for which $|w_n|_M$, is equal to $n$. This sequence exists due to Criterion 2.  Using Criterion 1, we now show that when we premultiply $w_n$ by a word of monoidal length less than $n$ the word we obtain cannot reduce to the empty word, and hence cannot represent the identity. It follows that~$w_n$ is geodesic in~$\MCay$, and so the sequence of words $\{w_n\}$ represents a sequence of geodesics of increasing length in $\MCay$. This shows that~$\MCay$ has infinite diameter.
	
	\subsection{Criterion 1}
	
	\textbf{$\AG$ has preserved signed suffixes} 
	
	\begin{definition}
		Suppose that $\AG$ is an Artin group with generating set $X$. 
		Let $w$ be a geodesic word {over $X$} that ends in a positive (resp.~negative) letter and let $u$ denote the longest positive (resp.~negative) proper suffix of $w$. Let $a$ be a single positive (resp.~negative) generator. We say that $\AG$ has \textit{preserved positive (resp.~negative) suffixes} if we can guarantee the existence of a word for the element of the group given by $aw$ {that is geodesic in X} and also has $u$ as a suffix. 
		If $\AG$ has both preserved positive and negative suffixes then we say the group has \textit{preserved signed suffixes}.
	\end{definition}  
	
	For example, free groups have preserved signed suffixes. In a free group, if $w$ is a geodesic with respect to the standard generators (\emph{i.e.}~a freely reduced word), and $aw$ is not, this means that  $w$ must begin with the letter $a^{-1}$. A geodesic representative for the group element given by $aw$ can thus be obtained via free cancellation of $aa^{-1}$ and this leaves the remainder of $w$ (in particular the longest positive suffix~$u$) unchanged. 
	
	Similarly, the free abelian group $\Z^n$, has preserved signed suffixes. If $w$ is a geodesic word with respect to the standard generators and $aw$ is not, this means that $w$ can be factored as $w=w_1a^{-1}w_2$, and a geodesic word for the group element given by $aw$ can be obtained by allowing $a$ to commute with each individual letter of $w_1$ to obtain the word $w_1aa^{-1}w_2$. After the final cancellation of $aa^{-1}$, a geodesic is obtained and the suffix $w_2$ is unchanged. The longest positive suffix of $w$ must also be a suffix of $w_2$ so this positive suffix is preserved. A symmetric argument shows that $\Z^n$ also has preserved negative suffixes and so has preserved signed suffixes. 
	
	In our results, we use the solution to the word problem for large type Artin groups given in \cite{HoltRees2012} and for 3-free Artin groups given in \cite{BlascoCumplidoMW2022}. In \Cref{Section:largetype}, we explain in more detail how these solutions to the word problem guarantee that large type and 3-free Artin groups have preserved signed suffixes.

	\subsection{Criterion 2}
	
	\textbf{$\AG$ contains an alternating blocking sequence $\{\alpha_n\}$}
	
	In this subsection, we will describe the criterion required to construct the 
	desired sequence of words $\{w_n\}$. 
	
	\begin{definition}
		
		Let $\AG$ be an Artin group with generating set $X$. Let $u$ be a word and $x\in X\cup X^{-1}$. We say that $(u,x)$ is a \textit{blocking pair} if given any geodesic word over~$X$ of the form $wu$, it follows that the word $wux$ is also a geodesic word over $X$. 
		
	\end{definition}
	
	Intuitively, the subword $u$ is blocking any letters from $w$ from canceling out the letter $x$.  
	
	For example, in a free group if $a,x \in X$ (both positive) then $(a,x)$ is a blocking pair. 
	In a free abelian group, if $a,x \in X$ and $a\neq x$, then $(a,x)$ is not a blocking pair because the word $x^{-1}a$ is geodesic, 
	but the word $x^{-1}ax$ is not. 
	
	We now use blocking pairs to build a sequence of words over $X$ that label arbitrarily long paths in $\MCay$. 
	
	\begin{definition}
		Suppose that $\{\alpha_n\}$ is an infinite sequence of words over $X$, where $\alpha_i$ is a nonempty positive word over $X$ if $i$ is odd and $\alpha_i$ is a nonempty negative word if $i$ is even. We say that the sequence $\{\alpha_n\}$ is an \textit{alternating blocking sequence} if for any letter $x$ in $\alpha_n$ there exists some subword $u$ of $\alpha_{n-1}\alpha_n$ immediately preceeding $x$ such that $(u,x)$ is a blocking pair.
	\end{definition}
	
	Our second criterion states that~$\AG$ must contain such an alternating blocking sequence.

	\begin{remark}
		If $\AG$ is of spherical type, then $\AG$ has no alternating blocking sequence. For suppose that $u$ is any positive word and that $x^{-1}\in X^{-1}$. Then $(u,x^{-1})$ cannot be a blocking pair. To prove this, we first recall the following facts about spherical type Artin groups from Garside theory: the square of the Garside element $\Delta^2$ is in the center of $\AG$ and satisfies that for all~$x\in X$, $x$ is a right divisor of $\Delta$. Now consider the word $\Delta^2u$, which represents an element of the monoid and so is a geodesic word over~$X$. On the other hand $\Delta^2ux^{-1}$ represents the same group element as $u\Delta^2x^{-1}$ which is not freely reduced (since $x$ is a right divisor of~$\Delta^2$).  Therefore $(u,x)$ is not a blocking pair.  It follows that no alternating sequence of words $\{\alpha_n\}$ can satisfy the conditions for an alternating blocking sequence.
	\end{remark}
	
	\subsection{Sufficiency of these criteria} In this subsection, we prove that if an Artin group $\AG$ satisfies Criterion 1 and Criterion 2, then the monoid Cayley graph~$\MCay$ has infinite diameter.

	\begin{theorem} \label{Thrm:Criteria}
		Let $\AG$ be an Artin group with standard generating set $X$ and positive monoid $M$. Suppose that $\AG$ has preserved signed suffixes, and that $\AG$ contains an alternating blocking sequence of words $\{\alpha_n\}$. Then the monoid Cayley graph~$\MCay$ has infinite diameter.
		
	\end{theorem}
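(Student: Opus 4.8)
The plan is to use Criterion 2 to manufacture, for every $n$, a word that is geodesic in $\MCay$ and has $\MCay$-length exactly $n$; since $n$ is arbitrary this forces infinite diameter. Concretely, set $w_n:=\alpha_1\alpha_2\cdots\alpha_n$ and let $g_n\in\AG$ be the element it represents. As the blocks $\alpha_i$ alternate in sign, the word over $M$ obtained by grouping $w_n$ into maximal signed subwords has exactly $n$ letters, so $|w_n|_M=n$ and hence $d_{\MCay}(e,g_n)\le n$. Everything then reduces to proving the reverse inequality $d_{\MCay}(e,g_n)\ge n$ for all $n$.

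For the lower bound I would argue by contradiction. Since any geodesic in $\MCay$ is an alternating product of monoid elements, an equality $d_{\MCay}(e,g_n)=m<n$ yields a word $v$ over $X$ with $|v|_M=m<n$ such that the word $v\,w_n$ represents the identity: take $v$ to be the inverse of the alternating product realising the geodesic, noting that inverting an alternating product preserves monoidal length. Thus it suffices to prove that \emph{for every word $v$ with $|v|_M<n$, the word $v\,w_n$ does not represent the identity in $\AG$.} I would establish this via a stronger statement proved by induction on $m=|v|_M$, peeling $v$ one block at a time from its left-hand end: writing $v=\beta v'$ with $\beta$ the leftmost maximal signed block, the element $v\,w_n$ admits an $X$-geodesic representative having $\alpha_{m+1}\alpha_{m+2}\cdots\alpha_n$ as a suffix. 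When $m<n$ this suffix is nonempty, so the representative is nonempty and $v\,w_n\ne e$, as required.

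The base case $m=0$ asserts that $w_n$ is itself $X$-geodesic, which follows from Criterion 2 by appending the letters of $\alpha_2,\dots,\alpha_n$ one at a time, invoking for each letter $x$ the blocking pair $(u,x)$ whose protecting subword $u$ lies intact inside $\alpha_{i-1}\alpha_i$. For the inductive step I take an $X$-geodesic $W'$ for $v'\,w_n$, which by hypothesis has the form $P'\,\alpha_m\cdots\alpha_n$, and prepend the single block $\beta$. The point is that prepending one block consumes at most one block $\alpha_m$ of the surviving tail. I would isolate the boundary: reduce $\beta\,P'\,\alpha_m$ to an $X$-geodesic $Q$, using Criterion 1 (preserved signed suffixes) to retain the suffix $u$ of $\alpha_m$ that blocks the first letter of $\alpha_{m+1}$; then re-append $\alpha_{m+1}\cdots\alpha_n$ using the blocking pairs of Criterion 2, which guarantee simultaneously that no cancellation crosses the junction and that $Q\,\alpha_{m+1}\cdots\alpha_n$ is geodesic. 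The surviving suffix is then $\alpha_{m+1}\cdots\alpha_n$, completing the step.

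The main obstacle is precisely this inductive step, together with honest sign bookkeeping and the management of the accumulated prefix $P'$. Preserved signed suffixes only protects the final same-sign block when one prepends a generator of matching sign, so on its own Criterion 1 controls only the boundary block $\alpha_m$; it is the blocking pairs of Criterion 2 that must act as a firewall preventing the cancellation triggered by $\beta$ from propagating past $\alpha_m$ into the protected tail. Making these two mechanisms mesh requires a case analysis on the sign of $\beta$ relative to $\alpha_m$: when the signs agree one applies the preserved-suffix property generator-by-generator and loses nothing, whereas when they are opposite one must verify that after $\beta$ cancels into $\alpha_m$ its remnant has the same sign as $\alpha_{m+1}$ and therefore merges rather than cancels, so that consumption halts at the boundary even in the Artin setting where reduction is governed by braid relations rather than free cancellation. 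Checking that these reductions are genuinely length-minimising, and that the interaction of $\beta$ with $P'$ does not let a single block of $v$ eat through more than one block of the tail, is the delicate heart of the argument.
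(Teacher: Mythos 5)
Your proposal follows essentially the same route as the paper: the same words $w_n=\alpha_1\cdots\alpha_n$, the same reduction to showing $v\,w_n\neq e$ whenever $|v|_M<n$, and the same induction peeling one signed block of $v$ at a time, with Criterion~1 used to prepend a block while protecting the boundary block $\alpha_m$ and Criterion~2 used to re-append $\alpha_{m+1}\cdots\alpha_n$. The one case you flag as the delicate heart --- prepending a block $\beta$ whose sign is opposite to that of $\alpha_m$, which Criterion~1 as stated does not cover --- never actually arises in the paper's set-up: writing $v=\beta_m\cdots\beta_1$ with $\beta_1$ (possibly empty) required to have the same sign as $\alpha_1$ forces $\beta_m$ to have the same sign as $\alpha_m$ by alternation, so Criterion~1 applies directly and no opposite-sign analysis is needed.
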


	\begin{proof}
		Let $w_n=\alpha_1\cdots \alpha_n$. We will show that the words $\{w_n\}$ label geodesic paths of length $n$ in $\MCay$. The claim that~$\MCay$ is infinite diameter follows.
		
		Consider some arbitrary word $v_m=\beta_m\dots \beta_2\beta_1$ where $\beta_i$ is a positive word over $X$ if $i$ is odd and $\beta_i$ is a negative word over $X$ if $i$ is even. 
		We allow the possibility that $\beta_1$ is the empty word, but for $i>1$ we assume that $\beta_i$ is non-empty; hence $v_m$ labels a path of length $m$ or $m-1$ in $\MCay$.
		
		To show that $w_n$  is a geodesic word over~$M$, it is sufficient to show that for all $v_m$ such that $m\leq n$, the word $v_mw_n$ does not represent the identity element in~$\AG$. Because the identity is represented by the empty word regardless of which generating set we use, we do this by showing that $v_mw_n$ regarded as a word over~$X$ cannot be reduced to the empty word. This follows from the below claim.
		
		\begin{claim}
			If $m\leq n$, then the group element represented by the word $v_mw_n$ has a geodesic representative over $X$ ending in $\alpha_m \cdots \alpha_{n-1}\alpha_n$. 
		\end{claim}

		We prove this claim via induction on $m$. 
		
		{\bf Base case:} Assume $m=1$. 
		We prove that for all~$n$, $v_1w_n$ is a geodesic word over $X$, and is therefore the representative we require.
		If $n=1$, then $v_1w_1=\beta_1\alpha_1$ is a positive word over $X$ and so must be geodesic over $X$; it is therefore a geodesic with suffix $\alpha_1$ as required. 
		
		Now we build the word~$v_1w_n$, and see that it is geodesic. We suppose that $j>1$ and that $v_1w_{j-1}$ is a geodesic word, and build the word $v_1w_j$ from $v_1w_{j-1}$ by appending one letter of~$\alpha_j=x_{j,1}x_{j,2}\dots x_{j,{k_j}}$ at a time and using induction on the number of letters we have appended.
		As the base case, we append the first letter~$x_{j,1}$. Then the fact that~$\{\alpha_n\}$ is an alternating blocking sequence guarantees the existence of some suffix $u_{j,1}$  of the word $v_1w_{j-1}$  such that $(u_{j,1},x_{j,1})$ is a blocking pair. Since we assumed that $v_1w_{j-1}$ is a geodesic word over $X$, the blocking pair condition implies that $v_1w_{j-1}x_{j,1}$ must also be a geodesic word over $X$.
		Now assume the inductive hypothesis that~$v_1w_{j-1}x_{j,1}\ldots x_{j,i-1}$ is a geodesic word over~$X$ and append the letter~$x_{j,i}$. Again, since~$\{\alpha_n\}$ is an alternating blocking sequence there exists some suffix $u_{j,i}$  of the word $v_1w_{j-1}x_{j,1}\cdots x_{j,i-1}$  such that $(u_{j,i},x_{j,i})$ is a blocking pair. Since $v_1w_{j-1}x_{j,1}\cdots x_{j,i-1}$ is a geodesic word over $X$ the blocking pair condition implies that $v_1w_{j-1}x_{j,1}\cdots x_{j,i}$ must also be a geodesic word over $X$.
		Iterating this method to append~$\alpha_2\ldots \alpha_n$ to~$v_1w_1=v_1\alpha_1$, we see at each step a letter $x_{j,i}$ is appended and the word remains geodesic over~$X$. Therefore $v_1w_n$ is a geodesic word over $X$ with suffix $\alpha_1\cdots \alpha_n$.
		
		{\bf Inductive hypothesis:} 
		Suppose that $m>1$. We assume that for all~$m\leq n$, the element of $\AG$ represented by $v_{m-1}w_n$ can be represented by a geodesic word $\xi$ over~$X$ that ends with $\alpha_{m-1} \cdots \alpha_n$.

		{\bf Inductive step:} Consider the case~$n=m$. By the inductive hypothesis the group element represented by $v_{m-1}w_m$ is represented by a geodesic word $\xi$ over~$X$ that ends with $\alpha_{m-1}\alpha_m$. 
		We assume that $m$ is odd, and note that a symmetric argument will work for the $m$ even case.
		Recall the word $v_m$ satisfies $v_m=\beta_mv_{m-1}$ and hence $v_mw_n=\beta_mv_{m-1}w_n=\beta_m\xi$.  
		Suppose $\beta_m=b_{l_m}\cdots b_2b_1$, \emph{i.e.}~{$\beta_m$} is of length~$l_m$ over $X$. 
		Since $m$ is odd, $\alpha_m$ and $\beta_m$ are both positive words over $X$, so each~$b_i\in X$. Starting with $b_1$, we append these letters to the 
		{left} of $\xi$ one at a time.
		The group $\AG$ has preserved signed suffixes, so when we append the positive letter $b_1$ to the {left} of the geodesic word $\xi$, we can obtain a new geodesic that ends in the same positive suffix as $\xi$, namely the suffix $\alpha_m$. Repeating this process for each~$b_i$ we obtain a geodesic word over~$X$ which represents the element $v_mw_m$ and has suffix $\alpha_m$.
		
		For $n>m$, we repeat the argument from the base case, in which we started with the geodesic representative of the group element given by $v_1w_{j-1}$  and built one for~$v_1w_j$. In this case, we start with the geodesic word we just constructed, which represents the element given by $v_{m}w_m$ and ends with~$\alpha_m$. As in the base case we append the letters $x_{j,i}$ one at a time on the right, for $m<j\leq n$, and due to the fact that~$\{\alpha_n\}$ is an alternating blocking sequence, the word remains a geodesic over $X$ at each step. After we have appended all the $x_{j,i}$ we obtain a geodesic representative of $v_mw_n$ ending in $\alpha_m\dots \alpha_n$ as desired. 
	\end{proof}
	
	{\begin{remark} In the above proof, we allowed the possibility that $|v_n|_M = |w_n|_M$ providing the last word $\beta_1$ of $v_n$ has the same sign as the first word $\alpha_1$ of $w_n$.  Thus, we have also shown that the element of $\AG$ represented by $w_n$ cannot be represented by a geodesic $v_n^{-1}$ in $\MCay$ beginning with a word of the opposite sign. \end{remark}}
	
\section{The large type and 3-free case} \label{Section:largetype} 

In this section we show that large type Artin groups and 3-free Artin groups that are not of spherical type satisfy the criteria in Section \ref{Section:criteria} that we need in order to apply Theorem~\ref{Thrm:Criteria}. 

An Artin group is said to have large type if all the associated parameters $m_{ij}$ are at least 3 (possibly infinite). A 3-free Artin group is an Artin group for which none of the parameters $m_{ij}$ are equal to 3.  

\begin{lemma}\label{Lem:3-free} Let $\AG$ be a 3-free Artin group  of infinite type with generating set X. Then either some $m_{ij}= \infty$, or $\G$ contains a triangle with at most one edge labelled 2.
\end{lemma}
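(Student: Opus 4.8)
The plan is to argue by contradiction, showing that failure of both conclusions forces $\WG$ to be finite, contradicting the infinite type hypothesis. So suppose that no $m_{ij}=\infty$ (equivalently, $\G$ is a complete graph with all labels finite) and that $\G$ contains no triangle with at most one edge labelled $2$. The second assumption says that \emph{every} triangle in $\G$ has at least two edges labelled $2$; equivalently, every triangle contains at most one edge whose label is $\ge 3$. Since $\AG$ is $3$-free, an edge has label $\ge 3$ precisely when its label is $\ge 4$; call such an edge \emph{large}.

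First I would show that the large edges form a matching, i.e.\ no two of them share a vertex. Indeed, if two large edges $\{a,b\}$ and $\{a,c\}$ met at a vertex $a$, then, since $\G$ is complete, the edge $\{b,c\}$ would also be present, and $\{a,b,c\}$ would be a triangle with two large edges — hence with at most one edge labelled $2$ — contradicting our standing assumption.

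The key step is then to read off the structure of the Coxeter group from this matching. Two standard generators $s_i,s_j$ of $\WG$ commute exactly when $m_{ij}=2$, so $\WG$ splits as a direct product according to the connected components of the subgraph on edges of label $\ge 3$. In our situation that subgraph is a matching, so its connected components are single vertices and single large edges. Hence
\[
\WG \;\cong\; \prod_{\text{components}} W_C,
\]
where each factor $W_C$ is either $\Z/2$ (for an isolated vertex) or the dihedral group $I_2(m)$ with $4\le m<\infty$ (for a large edge). Every such factor is finite, so $\WG$ is finite, i.e.\ $\AG$ is of spherical type. This contradicts the assumption that $\AG$ is of infinite type, and the lemma follows.

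I expect the only point requiring care to be the direct product decomposition: one must invoke the standard fact that a Coxeter system whose diagram is a disjoint union of rank-$\le 2$ pieces is the direct product of the corresponding (finite) dihedral and order-$2$ Coxeter groups. Everything else — the reformulation of the hypothesis and the matching observation — is elementary, and it is the completeness of $\G$ that is used in an essential way to manufacture the triangle in the matching argument.
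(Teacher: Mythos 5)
Your proof is correct, but it takes a genuinely different route from the paper's. The paper argues by induction on $|X|$: it finds an edge $\{x_i,x_j\}$ with $m_{ij}\neq 2$, observes that if every triangle through it had its other two edges labelled $2$ then $\AG$ would split as $A_R\times A_T$ with $A_R$ spherical dihedral and $A_T$ a smaller infinite-type special subgroup, and then applies the inductive hypothesis to $A_T$. You instead globalize this in one step: under the negation of both conclusions, the edges with label $\geq 3$ form a matching (your triangle argument, which correctly uses completeness of $\G$), so the classical Coxeter diagram decomposes into rank-$\leq 2$ components and $\WG$ is a finite product of copies of $\Z/2$ and finite dihedral groups, contradicting infinite type. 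Both proofs rest on the same underlying fact --- the direct product decomposition of a Coxeter/Artin system along commuting pieces --- but yours avoids the induction entirely and makes transparent that the only obstruction is finiteness of $\WG$; the paper's version stays at the level of the Artin group and its special subgroups, which fits the surrounding machinery (Corollary \ref{Cor:subgroups}) but is slightly less direct. One cosmetic remark: your argument never actually needs the $3$-free hypothesis (a component that is a single edge with $3\leq m<\infty$ is already finite dihedral), and indeed neither does the paper's --- the hypothesis only matters for how the lemma is later applied.
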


\begin{proof}  Assume $\AG$ has no $m_{ij}= \infty$.  Since $\AG$ is infinite type, it must contain at least 3 generators, that is, $|X| \geq 3$.  We will prove by induction on $|X|$ that $\G$ contains a triangle with at most one edge labeled 2.  Suppose $X=\{a,b,c\}$. If $a$ commutes with both $b$ and $c$, then $\AG \cong \Z \times A_{\{b,c\}}$ which contradicts our assumption that $\AG$ is infinite type.  
	
	Now suppose $X=\{x_1, \dots x_n\}, n>3$.  Since $\AG$ is infinite type it must contain an edge connecting some $x_i,x_j$ with $m_{ij} \neq 2$. If every triangle containing this edge has the other two edges labeled 2, then $x_i$ and $x_j$ commute with all other generators $x_k$.  Thus, $\AG \cong A_R \times A_T$ where $R= \{x_i,x_j\}$ and $T=X \backslash R$.  Since $A_R$ is spherical type, $A_T$ must be infinite type, so by induction, the subgraph spanned by $T$ contains a triangle with at most one edge labelled 2.
\end{proof}

\emph{From now on we will assume that our Artin groups $\AG$ are of infinite type.}
\medskip

A solution to the word problem for large type Artin groups is given in
\cite{HoltRees2012} and we will rely heavily on consequences of this algorithm. The algorithm works by successive manipulations of subwords of a
non-geodesic word until a free cancellation is possible. In \cite{BlascoCumplidoMW2022}, the algorithm is modified to apply to 3-free Artin groups by the addition of specific commutations between two generators.

We will say that two geodesics words are \emph{equivalent} if they represent the same element of $\AG$. We will use the following key takeaway from these papers to prove Criterion 1.

\begin{proposition}\textbf{Properties of
		rightward reducing sequences
		\cite[Proposition 3.3]{HoltRees2012}, \cite[Proposition 5.1] {BlascoCumplidoMW2022} \label{WordProbFact}}
	
	Let $\AG$ be a large type or 3-free Artin group, $w$ be a geodesic word over~$X$, and $x$ be a letter in $x\in X\cup X^{-1}$ such that $wx$ is not geodesic. Then there is a suffix $\zeta$ of $w$ and an algorithm called a rightward reducing sequence (RRS), such that when the RRS is applied to~$w$, $\zeta$ is transformed into an equivalent word $\zeta'$ that ends in $x^{-1}$. Furthermore, if $\zeta$ begins in a positive (resp.~negative) letter then $\zeta'$ ends in a positive (resp.~negative) letter. 
\end{proposition}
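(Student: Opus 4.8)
The plan is to split the statement into two parts of very different character. The bulk of it, namely the existence of the suffix $\zeta$, the existence of a rightward reducing sequence (RRS), and the fact that the RRS transforms $\zeta$ into an equivalent word $\zeta'$ ending in $x^{-1}$, is exactly the content of \cite[Proposition 3.3]{HoltRees2012} in the large type case and of \cite[Proposition 5.1]{BlascoCumplidoMW2022} in the 3-free case. For that part I would simply invoke those results rather than reprove them. All that is genuinely new is the final ``furthermore'' clause about signs, and this is where I would concentrate the argument.

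To prove the sign clause I would unwind the explicit description of an RRS in those two papers. An RRS is a finite composition of elementary length-preserving moves, each of which is an application of a defining relation $x_ix_jx_i\cdots=x_jx_ix_j\cdots$ (of length $m_{i,j}$) to an alternating subword, together with, in the 3-free setting, the extra commutation moves added in \cite{BlascoCumplidoMW2022}. The key structural observation I would extract is that each such move acts on a subword all of whose letters share a common sign: a defining relation carries a positive (respectively negative) alternating block to another positive (respectively negative) alternating block, and a commutation moves letters without altering their sign. Thus although a move may change the \emph{identity} of a letter (for instance turning an $x_i$ into an $x_j$), it never changes its sign.

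The geometric picture I would then use is that an RRS propagates a single ``active'' cancellation rightward through $w$: the first letter of $\zeta$ is the letter that initiates the reduction, and under successive moves this active letter migrates to the right end of the evolving suffix, finally arriving there as the letter $x^{-1}$ that cancels the appended $x$. Tracking this active letter across the whole sequence, and using the sign-invariance of each elementary move, I would conclude that the sign of the terminal letter $x^{-1}$ of $\zeta'$ equals the sign of the first letter of $\zeta$, which is precisely the asserted clause: $\zeta$ begins positive (respectively negative) if and only if $\zeta'$ ends positive (respectively negative).

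I expect the main obstacle to be making this ``active letter'' bookkeeping rigorous at the two boundaries of $\zeta$. The interior of the argument is automatic, since sign is preserved letter by letter; the work lies in (i) confirming from the explicit RRS rules that the reduction is indeed initiated at the leftmost letter of $\zeta$, so that the first letter of $\zeta$ really is the letter whose sign we are entitled to track, and (ii) checking that every move occurring at the propagating front, including the additional commutation moves of the 3-free algorithm and any ``critical'' configuration where the reduction turns a corner, preserves the sign of that front rather than merely the multiset of signs. Verifying these two boundary behaviours uniformly over all configurations that an RRS can produce is the crux; once they are established the conclusion follows by a straightforward induction on the number of moves in the RRS.
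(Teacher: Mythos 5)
The paper offers no proof of this proposition at all: it is imported wholesale, including the final sign clause, as a ``key takeaway'' from \cite[Proposition 3.3]{HoltRees2012} and \cite[Proposition 5.1]{BlascoCumplidoMW2022}. Your decision to cite those results for the existence of $\zeta$, the RRS, and the terminal letter $x^{-1}$ therefore matches the paper exactly, and citing the same sources for the sign clause would also have sufficed.

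The argument you then sketch for the sign clause, however, rests on a false premise. The elementary moves of an RRS are not applications of a defining relation to a single-sign alternating block; they are $\tau$-moves applied to \emph{critical words}, which are two-generator geodesics that are in general of mixed sign. For instance, with $m_{a,b}=3$ the relation gives $aba^{-1}=b^{-1}ab$: this is a length-preserving elementary rewrite, but the letter in position one changes from positive to negative, so neither ``each move acts on a subword all of whose letters share a common sign'' nor ``a move never changes a letter's sign'' holds, even as a statement about positions or about an ``active letter'' tracked naively. (The same phenomenon occurs for $m_{a,b}\geq 4$ in the 3-free setting, e.g.\ $b^{-1}aba=abab^{-1}$ when $m_{a,b}=4$.) What is actually true, and what the cited propositions establish, is a \emph{diagonal} sign relation: the sign of the \emph{first} letter of a critical word equals the sign of the \emph{last} letter of its $\tau$-image (in the example above, the initial $a$ and the terminal $b$ are both positive, while the terminal $a^{-1}$ and the initial $b^{-1}$ are both negative). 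Since each step of an RRS feeds the last letters of one $\tau$-move into the first letters of the next, this diagonal relation propagates from the first letter of $\zeta$ to the last letter of $\zeta'$, which is precisely the ``furthermore'' clause. So your bookkeeping arrives at the correct conclusion, but the step justifying it would fail as written; you would need to replace letterwise sign-invariance with the first-letter-to-last-letter sign correspondence for $\tau$-moves proved in the cited papers.
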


\subsection{Criterion 1: Preserving signed suffixes}

\begin{lemma}\label{Lem:LargeTypeSuffix}
	Let $\AG$ be an Artin group of large type or 3-free type. Then $\AG$ has preserved signed suffixes. 
\end{lemma}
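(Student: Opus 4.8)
The plan is to reduce the left-multiplication in the definition of preserved suffixes to the right-multiplication of Proposition~\ref{WordProbFact} via word reversal, and then to exploit the sign-matching clause of that proposition to guarantee that the reduction never disturbs the distinguished suffix $u$. I will treat the positive case in detail; the negative case is entirely symmetric (interchange "positive" and "negative" throughout). So let $w$ be a geodesic word over $X$ ending in a positive letter, let $u$ be its longest positive proper suffix, and let $a\in X$ be a positive generator. I must produce a geodesic word representing $aw$ that has $u$ as a suffix.

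First I would record a leftward version of Proposition~\ref{WordProbFact}. The defining relations of $\AG$ are palindromic, so word reversal $v\mapsto v^{\mathrm{rev}}$ induces a length-preserving anti-automorphism of $\AG$ that carries geodesic words to geodesic words. Applying Proposition~\ref{WordProbFact} to $w^{\mathrm{rev}}$ together with the letter $x$, and then reversing, yields: \emph{if $w$ is geodesic and $xw$ is not geodesic, then there is a prefix $\eta$ of $w$ which the (reversed) algorithm transforms, leaving the complementary suffix untouched, into an equivalent word $\eta'$ of the same length beginning in $x^{-1}$; moreover the last letter of $\eta$ has the same sign as the first letter of $\eta'$.}

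Now the argument proper. If $aw$ is already geodesic, then $aw$ itself works, since it has $w$, and hence $u$, as a suffix. Otherwise $aw$ is non-geodesic; in particular $w$ is not a positive word, since a positive $aw$ would be geodesic (a positive word of length $\ell$ maps to $\ell$ under the homomorphism $\AG\to\Z$ summing exponents, which bounds the length of any representative). Hence $u$ is preceded in $w$ by a negative letter. Apply the leftward statement with $x=a$ and write $w=\eta\,r$, where $\eta$ is the prefix it produces; this gives an equivalent geodesic word $\eta'\,r$ with $\eta'$ beginning in $a^{-1}$ and with the suffix $r$ literally unchanged. The sign-matching clause is the crux: since $\eta'$ begins in the negative letter $a^{-1}$, the prefix $\eta$ ends in a negative letter. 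But every negative letter of $w$ occurs before the maximal positive suffix $u$, so $\eta$ ends at or before the negative letter preceding $u$; consequently $u$ is a suffix of the untouched part $r$. Finally $aw=a\,\eta'\,r$ begins with $a\,a^{-1}$, which cancels freely to give a word of length $|w|-1$; since $aw$ is non-geodesic, this word is geodesic, and it retains $u$ as a suffix because $u$ is a suffix of $r$. This establishes preserved positive suffixes, and the symmetric argument establishes preserved negative suffixes.

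The main obstacle, and the single place where the hypotheses of Proposition~\ref{WordProbFact} genuinely earn their keep, is ensuring that the cancellation triggered by prepending $a$ cannot propagate into the positive suffix $u$. This is precisely what the sign-matching clause delivers: it forces the reduction to terminate at a negative letter, which must lie to the left of $u$. Without this clause one could not exclude the possibility that the reduction eats into $u$, and the preservation of the suffix would fail.
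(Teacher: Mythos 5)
Your proof is correct, but it takes a genuinely different route from the paper's. The paper works on $aw$ read left to right: it factors $aw=aw_1w_2$ with $aw_1$ the longest geodesic prefix, applies the rightward reducing sequence of Proposition~\ref{WordProbFact} to $aw_1$ and the next letter $x$, argues that the rewritten suffix $\zeta$ must contain the initial letter $a$ (else a prefix of the geodesic $w$ would fail to be geodesic), and then uses the sign-matching clause in the direction ``$\zeta$ begins positively, hence $\zeta'$ ends positively, hence the cancelled letter $x$ is negative and so lies to the left of $u$.'' You instead convert left multiplication into right multiplication via the reversal anti-automorphism, apply Proposition~\ref{WordProbFact} in one shot to $w^{\mathrm{rev}}$ and the letter $a$, and use the sign-matching clause in the opposite direction: the rewritten prefix $\eta'$ begins with the negative letter $a^{-1}$, so $\eta$ ends in a negative letter and therefore stops short of $u$. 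Both arguments rest on the same two ingredients --- the sign-matching clause and the lower bound that no word for the element $aw$ can be shorter than $|w|_X-1$ --- but yours avoids the longest-geodesic-prefix factorization and the ``$\zeta$ must start with $a$'' step, at the cost of justifying that Proposition~\ref{WordProbFact} reverses; this is legitimate, since the Artin relations are preserved by word reversal and the proposition is a purely existential statement about words, lengths and equivalence. Two small points to tighten: the final word of length $|w|_X-1$ is geodesic not ``since $aw$ is non-geodesic'' but because $w$ is geodesic, so $a^{-1}$ times any shorter representative of $aw$ would contradict geodesity of $w$ (the paper states this explicitly); and you implicitly use that the reducing sequence preserves word length, so that $\eta'r$ is again geodesic --- true, but worth saying.
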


\begin{proof}

	Suppose that $w$ is a geodesic word over $X$ in $\AG$, with longest positive suffix $u$, and suppose that $a$ is a positive generator from $X$.
	
	If $aw$ is geodesic, then $u$ is also a suffix of the geodesic word $aw$.
	
	If $aw$ is non-geodesic but not freely reduced, then the first letter of $w$ must be $a^{-1}$ and this cancellation must produce a geodesic representative of $aw$ that has $u$ as a suffix.
	
	So now suppose that $aw$ is freely reduced but not geodesic. Factor $aw$ as $aw_1w_2$ where $aw_1$ is the largest prefix of $aw$ that is geodesic. Denote by $x$ the first letter of $w_2$. 
	By \Cref{WordProbFact} there is an RRS applied to a suffix $\zeta$ of $aw_1$ that results in a new geodesic ending in $x^{-1}$. 
	This suffix $\zeta$ must start with the positive letter $a$, as otherwise the RRS could be applied to $w_1$ and $w$ would not be geodesic. So~$\zeta=aw_1$, and after performing the RRS algorithm, we get a word $\zeta'$ equivalent to~$aw_1$. By \Cref{WordProbFact}, $\zeta'$ ends in a positive letter, since $a$ was positive. We know this letter is $x^{-1}$, so we conclude that $x$ is a negative letter. This means that $u$, as a positive suffix of $aw$, must be a proper suffix of $w_2$.  
	
	Note also that $w$ is geodesic, so that any word representing the same group 
	element as $aw$ must have length at least $|w|_X-1$. For suppose to the contrary that there were a shorter word $\xi$ representing the same element as $aw$. Then $a^{-1}\xi$ is a word representing the same element as $w$, but with length smaller than $|w|_X$, contradicting $w$ being geodesic. This implies that applying a single RRS on $aw$ followed by the free cancellation between $x^{-1}$ and $x$, we obtain a geodesic representative of the same group element as $aw$. As explained in the previous paragraph, except for the cancellation of the first letter, the suffix $w_2$ has been unaffected by this algorithm, thus we have constructed a geodesic representative of $aw$ that ends in $u$. 
\end{proof}

\subsection{Criterion 2: Constructing alternating blocking sequences}

In this section we will explain how to construct an alternating blocking sequence for large type and 3-free Artin groups with at least 3 generators. Once this sequence is constructed we may apply \Cref{Thrm:Criteria} and \Cref{Cor:subgroups} to conclude that any infinite type Artin group $\AG$ that is either large type or 3-free, has infinite diameter monoid Cayley graph $\MCay$.

To prove Criterion 2 we rely on the following:

\begin{proposition} \textbf{Suffixes of equivalent geodesics \label{KeyConcepts2}}
	
	\begin{enumerate}
		\item \cite[Prop 4.5]{HoltRees2012} Suppose that $\AG$ is a large type Artin group and that $a,b\in X$ are distinct generators. 
		Given a geodesic word $wb^\pm a^\pm$, any equivalent geodesic word must end in either $a^\pm$ or $b^\pm$.  
		\item \label{EqGeo3free} Suppose that $\AG$ is a 3-free Artin group, and $a,b,c\in X$ are distinct generators, such that {$m_{a,b}>3$} and {$m_{b,c}>3$}
		
		\begin{enumerate}
			\item Given a geodesic word $wb^\pm a^\pm$, any equivalent geodesic must end in $a^\pm$, $b^\pm$ or a letter that commutes with $a$.
			\item Given a geodesic word $wb^\pm c^\pm b^\pm a^\pm$, any equivalent geodesic must end in $a^\pm$ or a letter that commutes with $a$. 
		\end{enumerate}
	\end{enumerate}
	
\end{proposition}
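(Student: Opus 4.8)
Part (1) is stated as, and is, \cite[Proposition 4.5]{HoltRees2012}, so no argument is needed there; the content to be supplied is part~(2), and the plan is to imitate the proof of part~(1), substituting the rightward reducing sequences of \Cref{WordProbFact} (available in the 3-free setting by \cite{BlascoCumplidoMW2022}) for the large-type reductions used by Holt and Rees. The common device, already used in the proof of \Cref{Lem:LargeTypeSuffix}, is to detect the final letter of an equivalent geodesic by appending a single letter and invoking \Cref{WordProbFact}.

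For 2(a), let $u=wb^\pm a^\pm$ be geodesic and let $v$ be an equivalent geodesic with final letter $y$. If $y=a^\pm$ we are done, so assume $y\neq a^\pm$. Writing $v=v_0y$, the word $v_0$ is geodesic of length $|u|_X-1$ and represents the same element as $uy^{-1}$; hence $uy^{-1}$, which has length $|u|_X+1$, is non-geodesic. Applying \Cref{WordProbFact} to the geodesic $u$ and the letter $y^{-1}$ produces a suffix $\eta$ of $u$ and an RRS rewriting $\eta$ into an equivalent geodesic $\eta'$ that ends in $y$. Since $u$ ends in $b^\pm a^\pm$, the suffix $\eta$ ends in $a^\pm$ with preceding letter $b^\pm$, and the remaining work is to read off, move by move, how an RRS can change this terminal $a^\pm$ into $y$. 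The elementary rewrites composing an RRS in \cite{BlascoCumplidoMW2022} are built from braid relations and commutations, so a move altering the last letter is either a braid relation on a pair $\{a,s\}$ acting on an alternating block ending at the right end, which forces $s$ to be the letter preceding $a^\pm$, namely $b$, and makes the new final letter $a^\pm$ or $b^\pm$; or it is a commutation $s\,a^\pm\leftrightarrow a^\pm s$ with $m_{a,s}=2$, which makes the final letter a generator $s$ commuting with $a$. These are exactly the three permitted endings.

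For 2(b) the set-up and reduction are identical, starting from $u=wb^\pm c^\pm b^\pm a^\pm$, and the extra point to establish is that $y$ cannot be $b^\pm$. Here the hypotheses $m_{a,b}>3$ and $m_{b,c}>3$, which with 3-freeness force $m_{a,b}\ge 4$ and $m_{b,c}\ge 4$, do the work. To flip the terminal $a^\pm$ to a $b^\pm$ one needs an $\{a,b\}$-alternating block of length $m_{a,b}\ge 4$ ending at the final position; but the letters preceding $a^\pm$ read $b^\pm c^\pm b^\pm\cdots$, so the maximal $\{a,b\}$-alternating run ending in $a^\pm$ has length only $2<m_{a,b}$ and is capped by the generator $c\neq a$. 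The plan is to show no RRS can lengthen this run: because $m_{b,c}\ge 4$ as well, $b$ and $c$ neither commute nor admit a short braid relation, so the guarding $c^\pm$ cannot be detached from the terminal $b^\pm$, and the $\{a,b\}$-run stays too short to permit a braid move exposing a final $b^\pm$. The only remaining way to vacate the terminal slot is again a commutation, giving a letter commuting with $a$.

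The main obstacle in both parts is precisely this last step: controlling the cumulative effect of an \emph{entire} rightward reducing sequence on the terminal letter, rather than a single move, and in particular proving in (b) that the short, $c$-guarded $\{a,b\}$-run can never be unlocked. Making this rigorous calls for an induction over the moves of the RRS tracking a suitable invariant of the current suffix, such as the length together with the two generators of the maximal alternating block at its right-hand end, and checking that each admissible move of \cite{BlascoCumplidoMW2022} either preserves this invariant or already terminates in one of the permitted final letters. I expect the bookkeeping in part~(b), where two interacting braid relations of label $\ge 4$ must be controlled simultaneously, to be the most delicate point.
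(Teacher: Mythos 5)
Your overall strategy for part (2) coincides with the paper's: reduce to the situation where appending $y^{-1}$ (the inverse of the last letter of the equivalent geodesic) makes the word non-geodesic, invoke \Cref{WordProbFact} to obtain a rightward reducing sequence, and then analyse what such a sequence can do to the terminal letter. Part (1) is indeed just a citation. The problem is that the analysis of the RRS --- which is the entire mathematical content of part (2) --- is not carried out, and the heuristic you substitute for it is not a correct model of how these sequences operate.

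Concretely, you assume that the only ways an elementary move can alter the final letter are (i) a commutation, or (ii) ``a braid relation on a pair $\{a,s\}$ acting on an alternating block of length $m_{a,s}$ ending at the right end,'' and in 2(b) you argue that no such block exists because the maximal $\{a,b\}$-alternating run ending in $a^\pm$ has length $2 < m_{a,b}$. But the moves in \cite{HoltRees2012} and \cite{BlascoCumplidoMW2022} are not applications of the defining relation to sign-coherent alternating blocks; they are rewrites of general two-generator subwords, which can change the final letter without any long alternating block being present (already in the dihedral group with $m=3$ one has $bab^{-1}=a^{-1}ba$, so a length-$3$ geodesic ending in $b^{-1}$ is equivalent to one ending in $a$). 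So the invariant you propose to track (``length and generators of the maximal alternating block at the right-hand end'') is not preserved by the actual moves, and the induction you sketch would not close. The paper instead extracts from \cite{BlascoCumplidoMW2022} a factorisation of the relevant suffix as $\xi_1\cdots\xi_{k+1}$ together with five structural facts (F1)--(F5): only commutations touch $\xi_{k+1}$; each $\xi_i$ with $i\le k$ has syllable length at least $2$; a factor ending in $x^\pm y^\pm$ with $m_{x,y}>3$ is rewritten to a word of syllable length $2$ in $x,y$; and, crucially for 2(b), no factor $\xi_i$ can end in $x^\pm y^\pm z^\pm$ with $m_{x,y}>3$ and $m_{y,z}>3$. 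It is this last obstruction (F5), applied to the suffix $b^\pm c^\pm b^\pm a^\pm$ in the two cases according to whether $c^\pm$ lies in $\xi_k$ or $\xi_{k-1}$, that kills the possibility of an RRS in 2(b); nothing in your sketch supplies an equivalent of it. You correctly identify this as the ``most delicate point,'' but identifying the gap is not the same as filling it.
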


Part(2) above of the above proposition is new. We will prove Criterion 2 is satisfied for both large type and 3-free Artin groups assuming this result, and return to the proof of part (2) at the end of this section.

\subsubsection{Large type Artin groups}

\begin{lemma}\label{Lem:Criterion2Large} Let $\AG$ be an Artin group of large type. Then $\AG$ contains an alternating blocking sequence.
\end{lemma}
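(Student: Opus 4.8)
The plan is to turn \Cref{KeyConcepts2}(1) into a large supply of length-two blocking pairs and then weave three generators together in a periodic, sign-alternating pattern. Since $\AG$ is assumed to be of infinite type, a two-generator large type group must have its single label equal to $\infty$ and hence be the free group on two generators, whose monoid Cayley graph was already shown to have infinite diameter directly in the example following \Cref{Cor:subgroups}; so we lose nothing by assuming $|X|\geq 3$, and I fix three \emph{distinct} generators $a,b,c\in X$.

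First I would establish the key fact. \textbf{Claim:} if $a,b,c\in X$ are distinct and $\epsilon,\delta,\gamma\in\{+,-\}$, then $(b^{\epsilon}a^{\delta},\,c^{\gamma})$ is a blocking pair. To prove this, let $W=wb^{\epsilon}a^{\delta}$ be any geodesic word over $X$ and suppose, for contradiction, that $Wc^{\gamma}$ is not geodesic. Applying \Cref{WordProbFact} to the geodesic $W$ and the letter $x=c^{\gamma}$, a rightward reducing sequence transforms a suffix of $W$ into an equivalent word ending in $(c^{\gamma})^{-1}=c^{-\gamma}$; substituting this suffix yields a geodesic equivalent to $W$ ending in $c^{-\gamma}$. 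But $W=wb^{\epsilon}a^{\delta}$ has exactly the form required by \Cref{KeyConcepts2}(1), so every geodesic equivalent to $W$ ends in $a^{\pm}$ or $b^{\pm}$. Hence $c^{-\gamma}\in\{a^{\pm},b^{\pm}\}$, forcing $c\in\{a,b\}$ and contradicting distinctness. Therefore $Wc^{\gamma}$ is geodesic, proving the claim.

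Next I would assemble the sequence. Let the underlying sign-free letter sequence be the period-three string $abc\,abc\,abc\cdots$, cut into consecutive length-three blocks, and assign every letter of the $i$-th block a positive sign when $i$ is odd and a negative sign when $i$ is even, so that $\alpha_{2k-1}=a^{+}b^{+}c^{+}$ and $\alpha_{2k}=a^{-}b^{-}c^{-}$. Each $\alpha_i$ is a nonempty word of the prescribed sign. To verify the blocking condition, fix $n\geq 2$ and a letter $x$ of $\alpha_n$, and let $u$ be the two letters of $\alpha_{n-1}\alpha_n$ immediately preceding $x$; these exist because $\alpha_{n-1}$ has length three. Since the concatenated generator sequence is globally periodic with period $abc$, the generators occurring in $u$ together with the generator of $x$ are precisely $\{a,b,c\}$, hence pairwise distinct. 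By the Claim, $(u,x)$ is a blocking pair, so the defining condition holds for every letter of every block $\alpha_n$ with $n\geq 2$.

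The only letters not covered are the first two letters of $\alpha_1$, which have no length-two predecessor; this is harmless, since $\alpha_1$ is positive and therefore automatically geodesic, and inspection of the proof of \Cref{Thrm:Criteria} shows the blocking property is invoked only for the blocks $\alpha_j$ with $j\geq 2$. The main obstacle is the Claim: one must feed \Cref{WordProbFact} and \Cref{KeyConcepts2}(1) in the correct order, using the former to extract an equivalent geodesic ending in $c^{-\gamma}$ and the latter to forbid such an ending. The rest—choosing the period-three pattern so that every window of three consecutive generators exhausts $\{a,b,c\}$, and tracking the alternating signs—is routine bookkeeping, since the blocking pairs of the Claim hold for arbitrary signs.
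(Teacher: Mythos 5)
Your proof is correct and follows essentially the same route as the paper's: both rest on the single observation, obtained by combining \Cref{WordProbFact} with \Cref{KeyConcepts2}(1), that a two-letter suffix on two distinct generators blocks any third distinct generator of either sign, and then string such blocking pairs into a periodic sign-alternating sequence. The only difference is cosmetic --- you take period-three blocks $abc,\ a^{-1}b^{-1}c^{-1},\dots$ where the paper cycles through the length-two pairs $ab,\ c^{-1}a^{-1},\ bc,\ a^{-1}b^{-1},\ ca,\ b^{-1}c^{-1}$ with period six --- and your explicit remark that the blocking condition is only ever invoked for blocks $\alpha_j$ with $j\geq 2$ is a point the paper leaves implicit.
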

\begin{proof}
	Suppose that $\AG$ is a large type Artin group and that $a,b,c\in X$ are three distinct positive generators. We define an alternating blocking sequence by
	
	\[\alpha_1 = ab, \alpha_2=c^{-1}a^{-1}, \alpha_3=bc, \alpha_4=a^{-1}b^{-1}, \alpha_5=ca ,\alpha_6=b^{-1}c^{-1}\] and for $i>6$, $\alpha_i=\alpha_{i-6}$.
	
	This is a sequence of words in the standard generators such that $\alpha_i$ is a positive word when $i$ is odd and a negative word when $i$ is even. 
	It remains to verify that for any letter $x$ in $\alpha_i$ there exists some subword $u$ of $\alpha_{i-1}\alpha_i$ immediately preceding $x$ such that $(u,x)$ is a blocking pair. Without loss of generality we assume that $i=2\mod{6}$ as other arguments are symmetric. Thus it remains to show that both $(ab,c^{-1})$ and $(bc^{-1},a^{-1})$ are blocking pairs.

	Suppose to the contrary that, for some word $w$, $wab$ is geodesic, but $wabc^{-1}$ is not. Then by
	\Cref{WordProbFact}, the word $wab$ admits a rightward reducing sequence that transforms it to a word $w'$ such that the final letter of $w'$ is $c$.
	However, since $w'$ is a geodesic representative of $wab$, this contradicts part (1) of \Cref{KeyConcepts2}, which requires that every geodesic representative of $wab$ must end with a letter in the set~$\{a,b,a^{-1},b^{-1}\}$. Hence $(ab,c^{-1})$ is a blocking pair. Notice in the above argument we used only the fact that $a,b,c$ were distinct generators in a large type Artin group, so an identical argument can be used to show that $(bc^{-1},a^{-1})$ is a blocking pair. 
	This shows that the given sequence $\{\alpha_i\}$ is an alternating blocking sequence.
\end{proof}

\subsubsection{3-free Artin groups}

\begin{lemma}\label{Lem:Criterion2} Let $\AG$ be a 3-free Artin group of infinite type such that no~$m_{ij}=\infty$. Then $\AG$ contains an alternating blocking sequence.
\end{lemma}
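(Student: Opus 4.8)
The plan is to produce an explicit alternating blocking sequence supported on a single triangle of generators and to verify the blocking-pair conditions using Proposition~\ref{KeyConcepts2} together with the rightward-reducing-sequence machinery of Proposition~\ref{WordProbFact}. By Lemma~\ref{Lem:3-free}, the hypotheses guarantee a triangle on three generators $a,b,c$ with at most one edge labelled $2$. After relabelling I may assume that the exceptional edge (if it exists) is $\{a,c\}$, so that $m_{a,b}\ge 4$ and $m_{b,c}\ge 4$; thus $b$ is a \emph{hub} and the triple $(a,b,c)$ satisfies the hypothesis $m_{a,b}>3$, $m_{b,c}>3$ of Proposition~\ref{KeyConcepts2}(2). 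I would split into two cases according to whether or not any triangle edge is labelled $2$, since it is exactly the presence of a commuting pair that governs the difficulty.

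In the first case all three edges are labelled $\ge 4$, so no two of $a,b,c$ commute. I claim the period-$6$ sequence of Lemma~\ref{Lem:Criterion2Large},
\[
\alpha_1=ab,\ \alpha_2=c^{-1}a^{-1},\ \alpha_3=bc,\ \alpha_4=a^{-1}b^{-1},\ \alpha_5=ca,\ \alpha_6=b^{-1}c^{-1},
\]
still works. The verification that $(ab,c^{-1})$, $(bc^{-1},a^{-1})$ and their cyclic images are blocking pairs runs exactly as before, now citing Proposition~\ref{KeyConcepts2}(2)(a) in place of part~(1): if $w\,ab\,c^{-1}$ failed to be geodesic, then by Proposition~\ref{WordProbFact} there would be a geodesic representative of $w\,ab$ ending in the specific letter $c$, whereas \ref{KeyConcepts2}(2)(a) forces every such representative to end in $a^{\pm}$, $b^{\pm}$, or a letter commuting with $a$. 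Since here $m_{a,c}\ge 4$, the letter $c$ does \emph{not} commute with $a$, so we reach a contradiction even though the ``commutes with $a$'' clause is no longer empty in the ambient group. The remaining blocking pairs follow by the symmetry of the three edges.

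The substantive case is when exactly one edge, say $\{a,c\}$, is labelled $2$, so $a$ and $c$ commute. The sequence above now breaks: in $(ab,c^{-1})$ the equivalent geodesic of $w\,ab$ is permitted by \ref{KeyConcepts2}(2)(a) to end in $c$ (a letter commuting with $a$), and likewise the pair $(ca,b^{-1})$ requires $a$ as a hub, which fails since $m_{c,a}=2$. To repair this I would upgrade the length-two context words to the length-four patterns supplied by Proposition~\ref{KeyConcepts2}(2)(b): after a block $b^{\pm}c^{\pm}b^{\pm}a^{\pm}$ every equivalent geodesic ends in $a^{\pm}$ or a letter commuting with $a$, and since $b$ is neither $a^{\pm}$ nor commutes with $a$, any appended letter whose inverse is $b^{\pm}$ is blocked; applying the same statement with the roles of $a$ and $c$ exchanged blocks $b^{\pm}$ after a pattern $b^{\pm}a^{\pm}b^{\pm}c^{\pm}$. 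Thus the hub letter $b$ can always be appended and blocked once it is preceded by an appropriate signed $bcba$-type pattern.

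The main obstacle, and the technical heart of the proof, is that the commuting generators $a$ and $c$ are never blocked by these statements alone: their inverses always lie among the permitted endings in both clauses of \ref{KeyConcepts2}(2). To control appended occurrences of $a^{\pm}$ and $c^{\pm}$ I would instead exploit the sign-preservation half of Proposition~\ref{WordProbFact}, namely that appending a positive (resp.\ negative) letter can only destroy geodesy through a rightward-reducing sequence whose reducing suffix $\zeta$ begins with a negative (resp.\ positive) letter. The idea is to engineer the sequence so that each appended $a^{\pm}$ or $c^{\pm}$ is immediately preceded by a pure positive or pure negative $bcba$-pattern that forces any such reduction to be localised within that pattern, yielding the required sign contradiction. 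Concretely I would interleave signed copies of $b\,c\,b\,a$ and of its inverse as the blocks, arranging the boundaries so that no cancellation occurs across a sign change and so that every interior letter is preceded either by a $bcba$-pattern (blocking a hub letter via \ref{KeyConcepts2}(2)(b)) or by a pattern that activates the sign argument for a commuting letter; I then expect to check the finitely many blocking-pair types that actually occur. Once such a sequence is in hand, Theorem~\ref{Thrm:Criteria} applies within the triangle subgroup and Corollary~\ref{Cor:subgroups} transfers the conclusion to $\AG$. I anticipate that making the sign-and-locality argument rigorous for the appended $a^{\pm}$ and $c^{\pm}$, and choosing block boundaries that avoid collapse, will be by far the hardest part.
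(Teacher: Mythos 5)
Your reduction to a triangle with $m_{a,b}>3$, $m_{b,c}>3$ and your use of Proposition~\ref{KeyConcepts2}(2)(b) to block the hub letter $b^{\pm}$ after a $b^{\pm}c^{\pm}b^{\pm}a^{\pm}$-pattern match the paper, and your diagnosis of why the large-type sequence of Lemma~\ref{Lem:Criterion2Large} fails when $m_{a,c}=2$ is correct. But the proposal has a genuine gap exactly where you say the difficulty lies: you assert that ``the commuting generators $a$ and $c$ are never blocked by these statements alone,'' and on that basis you retreat to a speculative sign-and-locality argument that you do not carry out. That assertion is false, and it causes you to miss the actual construction. You are only considering applications of Proposition~\ref{KeyConcepts2}(2) in which the context word ends in $a^{\pm}$ or $c^{\pm}$, so that the exceptional clause ``a letter that commutes with $a$'' (or with $c$) lets the other commuting generator through. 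But clause (2)(a) can equally be applied to a context ending in $c^{\pm}b^{\pm}$ or $a^{\pm}b^{\pm}$ (using $m_{b,c}>3$, resp.\ $m_{a,b}>3$), in which case every equivalent geodesic must end in one of those two letters \emph{or a letter commuting with $b$} --- and neither $a$ nor $c$ commutes with $b$. So $a^{\pm}$ and $c^{\pm}$ \emph{are} blocked by Proposition~\ref{KeyConcepts2}(2) alone, provided each of their occurrences is immediately preceded by a $b^{\pm}$ that is itself preceded by the other commuting generator.

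This is precisely how the paper arranges things: it takes $\alpha_i=babc$ for $i$ odd and $\alpha_i=b^{-1}a^{-1}b^{-1}c^{-1}$ for $i$ even, and checks that $(babc,b^{-1})$, $(cb^{-1},a^{-1})$, $(bcb^{-1}a^{-1},b^{-1})$ and $(a^{-1}b^{-1},c^{-1})$ are blocking pairs --- the first and third via clause (2)(b) (applied with $a$ and $c$ interchanged where needed, which is legitimate by the symmetry of the labels), the second and fourth via clause (2)(a) applied to the pairs $(c,b)$ and $(a,b)$ respectively. No appeal to the sign-preservation half of Proposition~\ref{WordProbFact} is needed beyond its standard use in converting ``$wux$ not geodesic'' into ``some equivalent geodesic of $wu$ ends in $x^{-1}$.'' Your interleaving idea gestures at the right shape of sequence, but without the observation above you have no mechanism to block $a^{\pm}$ and $c^{\pm}$, and the argument as written cannot be completed. (Your first case, where all three labels are at least $4$, is fine and in fact also follows from the paper's single sequence, whose verification never uses $m_{a,c}=2$.)
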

\begin{proof}
	By \Cref{Lem:3-free}, $\Gamma$ contains three generators $a,b,c$ such that at most one of the relations is a commutation. We have also already covered large type Artin groups, so we can assume that 
	\[m_{a,b}>3, \quad m_{b,c}>3, \quad m_{a,c}=2.\]
	
	\noindent We define an alternating sequence of words by
	\[
	\alpha_i=babc \text{ if $i$ is odd,} \quad
	\alpha_i=b^{-1}a^{-1}b^{-1}c^{-1} \text{ if $i$ is even.}\]
	This sequence alternates between positive and negative words in the standard generators, and so it remains to show that for any letter $x$ in $\alpha_i$ there exists some subword $u$ of $\alpha_{i-1}\alpha_i$ immediately preceding $x$ such that $(u,x)$ is a blocking pair. 
	Assuming without loss of generality that $i$ is even, it follows as in the proof of \Cref{Lem:Criterion2Large} that $(babc,b^{-1})$, $(cb^{-1},a^{-1})$, $(bcb^{-1}a^{-1},b^{-1})$, and $(a^{-1}b^{-1},c^{-1})$ are all blocking pairs, using part (2) of \Cref{KeyConcepts2} in place of part (1). 
	Note that in some cases part (2) of \Cref{KeyConcepts2} is applied to the three generators ordered as $c,b,a$ rather than as $a,b,c$, which is valid because of
	the symmetry in the triple $\{m_{a,b},m_{a,c}, m_{b,c}\}$.
\end{proof}

We conclude this section with the proof of part (2) of \Cref{KeyConcepts2}, for which we require more technical elements of the solution to the word problem as given in \cite{BlascoCumplidoMW2022}. Note that we essentially follow the notation from \cite{BlascoCumplidoMW2022}, except that we have used subwords $\xi_i$ here where subwords $w_i$ are used in \cite{BlascoCumplidoMW2022}, in order to avoid confusion with our previously defined sequence $w_n$.

Recall that given a word $u$ we can write 
$u=x_1^{k_1}x_2^{k_2}\cdots x_n^{k_n}$ where we assume the word is freely reduced and $x_i\neq x_{i+1}$. We call $n$ the syllable length of this word.

\begin{remark}
	To prove part (2) of \Cref{KeyConcepts2}, we will use the following key facts from \cite{BlascoCumplidoMW2022}. These facts follow directly from the way the algorithm for the solution to the word problem is defined in that paper. 
	Suppose that a word $w$ can be changed to a new word $w'$ via a rightward reducing sequence. 
	Then $w$ has a suffix $\xi_1\xi_2\cdots \xi_{k+1}$ satisfying the following properties. 
	
	\begin{itemize}
		\item[F1] For $1\leq i\leq k$,  the first $i$ steps of the algorithm involve a modification to $\xi_1\cdots \xi_i$ and leave the suffix $\xi_{i+1}\cdots \xi_{k+1}$ unaffected. 
		\item[F2] The final $(k+1)^{st}$ step in the algorithm involves only commutations of individual letters. Thus if $w'$ ends in a letter $x$, then either $x$ commutes with all the letters in $\xi_{k+1}$, or $\xi_{k+1}$ is the empty word, in which case the algorithm only involves $k$ steps. 
		
		\item[F3]The 3-free condition, together with \cite[Lemma 3.11]{BlascoCumplidoMW2022} implies that the syllable length of $\xi_i$ must be at least 2 for all $i\leq k$. 
		\item[F4] If $\xi_i$ has $x^\pm y^\pm$ as a suffix and $m_{x,y}>3$, then after the application of the $i^{\rm th}$ step in the algorithm, the word created has suffix $\mu \xi_{i+1}\cdots \xi_{k+1}$, where $\mu$ is a word in $x,y$ of syllable length 2.
		
		\item[F5] The subword $\xi_i$ cannot end in $x^\pm y^\pm z^\pm$, where $m_{y,z}>3$ and $m_{x,y}>3$.
	\end{itemize}
\end{remark}
\begin{proof}[Proof of \Cref{KeyConcepts2} (2)]
	
	To prove part (a), suppose that $u$ is a geodesic word ending in the letter $x$ and representing the same group element as $wb^\pm a^\pm$. Then $wb^\pm a^\pm x^{-1}$ is not geodesic and it follows from \Cref{WordProbFact} that either $wb^\pm a^\pm x^{-1}$ is not freely reduced, in which case, $x=a^\pm$ and we are done, or the word  $wb^\pm a^\pm$ can be changed via a rightward reducing sequence to a new word $u'$ ending in $x$.  
	
	Our goal is to show that one of the cases $x=a^\pm$, $x=b^\pm$ or $m_{x,a}=2$ holds. We consider a factorization of $wb^\pm a^\pm$ with suffix $\xi_1\xi_2\cdots \xi_{k+1}$. 
	If $\xi_{k+1}$ is non-empty, then it ends in $a^{\pm}$, and then by (F2) $x$ must commute with $a$. Otherwise, $\xi_{k+1}$ is empty. Then by (F3) $\xi_{k}$ has $b^\pm a^\pm$ as a suffix and, by (F4), after the $k^{th}$ and final step in the algorithm, the result is a word that ends in $a^\pm$ or $b^\pm$.
	
	Now we prove part (b). As for part (a), we consider a word $u$ ending in $x$ and equivalent to $wb^\pm c^\pm b^\pm a^\pm$. If $wb^\pm c^\pm b^\pm a^\pm x^{-1}$ is not geodesic, then either $wb^\pm c^\pm b^\pm a^\pm x^{-1}$ is not freely reduced and $x=a^\pm$ or we can use a rightward reducing sequence to obtain a new word $u'$ ending in $x$ and equivalent to $wb^\pm c^\pm b^\pm a^\pm$ by \Cref{WordProbFact}. We will show that the suffix $b^\pm c^\pm b^\pm a^\pm$ implies there is no such sequence. 
	
	We have already shown in the proof of part (a) that if $\xi_{k+1}$ is non-empty then $x$ must commute with $a$, and if $\xi_{k+1}$ is empty then $b^{\pm}a^{\pm}$ must be a suffix of $\xi_k$. We need now to consider further the case where $\xi_{k+1}$ is empty, and consider the two cases where $c^\pm$ is a part of $\xi_k$ and where $c^\pm $ is a part of $\xi_{k-1}$. In the first case, $c^\pm b^\pm a^\pm$ is a suffix of the word $\xi_k$, but then, since $m_{b,c}>3$ and $m_{a,b}>3$, by (F5) the algorithm cannot proceed to achieve our word ending in~$x$, a contradiction.
	
	If, on the other hand, $c^\pm$ is part of $\xi_{k-1}$, then $\xi_{k-1}$ must have $b^\pm c^\pm$ as a suffix. Then (F4) implies that after the $(k-1)^{st}$ step in the algorithm the word still ends in  $\mu b^\pm a^\pm$, where $\mu$ is a word in $b,c$ of syllable length $2$, that is, $\mu b^\pm a^\pm$ is either of the form $c^ib^ja^{\pm}$ or $b^jc^ib^{\pm}a^{\pm}$.  In either case, (F5) implies the algorithm cannot proceed, which is again a contradiction.
\end{proof}

\section{Conclusions} \label{Section:discussion} 
Combining  \Cref{Thrm:Criteria} with Lemmas~\ref{Lem:LargeTypeSuffix}, \ref{Lem:Criterion2Large} and \ref{Lem:Criterion2}, we conclude that large type and 3-free Artin groups (of infinite type) have infinite diameter monoid Cayley graphs.  Together with Corollary~\ref{Cor:subgroups}, we thus obtain our main theorem.

\begin{theorem}\label{Thm:Main}  Let $\MCay$ denote the Cayley graph of $\AG$ with respect to the generating set $M:=\AG^+$.  Then $\MCay$ has infinite diameter providing one of the following holds.
	\begin{enumerate}
		\item $\G$ contains a pair of vertices not connected by an edge (or equivalently, there exists $i,j$ with $m_{i,j}=\infty$).
		\item $\G$ contains a triangle with all labels $\geq 3$.
		\item $\G$ contains a triangle with no label equal to $3$ and at most one label equal to $2$. 
	\end{enumerate}
\end{theorem}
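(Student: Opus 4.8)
The plan is to assemble \Cref{Thm:Main} from the machinery already established, treating each of the three cases by producing a special subgroup whose monoid Cayley graph has infinite diameter and then invoking \Cref{Cor:subgroups}. The unifying principle is that it suffices, by \Cref{Prop:retraction} and its corollary, to locate a three-generator (or two-generator) special subgroup $A_T$ satisfying the hypotheses of \Cref{Thrm:Criteria}, since $\Cay{A_T}{M_T}$ isometrically embeds in $\MCay$. So the whole theorem reduces to checking, case by case, that the relevant small special subgroup has both preserved signed suffixes and an alternating blocking sequence.

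First I would dispatch case (1): if $\G$ contains two vertices $s,t$ not joined by an edge, then $A_{\{s,t\}}$ is the free group $F_2$, and the alternating word $st^{-1}st^{-1}\cdots st^{-1}$ of length $2m$ is visibly geodesic in $\Cay{F_2}{M}$; this is exactly the example worked out immediately after \Cref{Cor:subgroups}, so infinite diameter follows at once. Next, for case (2), a triangle with all labels $\geq 3$ spans a special subgroup that is itself a large type Artin group on three generators of infinite type; \Cref{Lem:LargeTypeSuffix} gives preserved signed suffixes (Criterion 1) and \Cref{Lem:Criterion2Large} constructs the alternating blocking sequence (Criterion 2), so \Cref{Thrm:Criteria} applies to the subgroup and \Cref{Cor:subgroups} pushes the conclusion up to $\AG$.

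For case (3), the triangle has no label equal to $3$ and at most one label equal to $2$, so the three-generator special subgroup it spans is $3$-free and of infinite type. Here I would apply \Cref{Lem:LargeTypeSuffix} again for Criterion 1 (it covers $3$-free as well as large type), and \Cref{Lem:Criterion2} for Criterion 2; the hypothesis that at most one edge is labelled $2$ is precisely what \Cref{Lem:3-free} and the normalization in \Cref{Lem:Criterion2} require, namely that we may take $m_{a,b}>3$, $m_{b,c}>3$, $m_{a,c}=2$ after relabelling. Again \Cref{Thrm:Criteria} and \Cref{Cor:subgroups} finish the case. I would remark that the large type and $3$-free hypotheses may overlap (a triangle with all labels $\geq 4$, say, is both), but this causes no difficulty since either lemma suffices.

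The bulk of the genuine mathematical content sits in the supporting lemmas rather than in the assembly, so the main obstacle is not in this final proof but was already overcome earlier: verifying that the blocking pairs in \Cref{Lem:Criterion2Large} and \Cref{Lem:Criterion2} really are blocking pairs, which rests on \Cref{KeyConcepts2} and ultimately on the rightward reducing sequence analysis of \cite{HoltRees2012, BlascoCumplidoMW2022}. For the statement of \Cref{Thm:Main} itself, the only subtlety I would flag explicitly is that the special subgroup produced in each case must be verified to be of \emph{infinite} type before \Cref{Thrm:Criteria} can apply—this is automatic in case (1), and in cases (2) and (3) follows because a three-generator Artin group whose defining graph is a triangle with the stated labels cannot have finite associated Coxeter group. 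With that checked, the three cases combine to give the theorem.
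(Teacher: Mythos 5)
Your proposal is correct and follows essentially the same route as the paper: case (1) via the free-group example after \Cref{Cor:subgroups}, and cases (2) and (3) by combining \Cref{Thrm:Criteria} with \Cref{Lem:LargeTypeSuffix}, \Cref{Lem:Criterion2Large}, \Cref{Lem:Criterion2} and then pushing up to $\AG$ via \Cref{Cor:subgroups}. Your explicit check that the relevant triangle subgroups are of infinite type is a worthwhile detail the paper leaves implicit.
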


Recall that Conjecture \ref{Conj:InfDiam} states that $\MCay$ has infinite diameter for all infinte type Artin groups, To complete the proof of this conjecture, it remains to consider the case in which all 3-generator special subgroup of $\AG$ are either spherical type, or correspond to triangles labelled $(2,3,n), n\geq 6$.  We believe that the latter case can be solved by arguments similar to those used in the large type and 3-free cases.

\bibliographystyle{alpha}
\bibliography{references.bib}		

\end{document}